\theoremstyle{plain}
\newtheorem{prop}{Proposition}
\newtheorem{lemm}[prop]{Lemma}
\newtheorem{fact}{Fact}
\newtheorem{ques}{Question}
\newtheorem{theoalph}{Theorem}
\theoremstyle{definition}
\theoremstyle{remark}
\newtheorem{rema}[prop]{Remark}
\newtheoremstyle{citing}
  {3pt}
  {3pt}
  {\itshape}
  {}
  {\bfseries}
  {.}
  {.5em}
  {\thmnote{#3}}
\theoremstyle{citing}
\newcommand{\partn}[1]{{\smallskip \noindent \textbf{#1.}}}
\DeclareMathAlphabet{\mathpzc}{OT1}{pzc}{m}{it} 
\newcommand{\C}{\mathbb{C}}
\newcommand{\R}{\mathbb{R}}
\newcommand{\cA}{\mathcal{A}}
\newcommand{\cO}{\mathcal{O}}
\newcommand{\sC}{\mathscr{C}}
\newcommand{\hB}{\widehat{B}}
\newcommand{\hU}{\widehat{U}}
\newcommand{\hV}{\widehat{V}}
\newcommand{\hvarphi}{\widehat{\varphi}}
\newcommand{\tB}{\widetilde{B}}
\newcommand{\tV}{\widetilde{V}}
\newcommand{\teta}{\widetilde{\teta}}
\newcommand{\tvarphi}{\widetilde{\varphi}}
\DeclareMathOperator{\mmod}{mod} 
\renewcommand{\=}{ : = }
\DeclareMathOperator{\diam}{diam}
\DeclareMathOperator{\dist}{dist}
\DeclareMathOperator{\Crit}{Crit}
\newcommand{\CC}{\overline{\C}}
\newcommand{\TCEC}{TCE condition}
\newcommand{\ExpShrink}{ESC}
\DeclareMathOperator{\per}{per}
\begin{document}

\title[Maximal entropy measure and non-uniform hyperbolicity]{The maximal entropy measure detects non-uniform hyperbolicity}
\author[J. Rivera-Letelier]{Juan Rivera-Letelier$^\dag$}
\thanks{$\dag$Partially supported by the Research Network on Low Dimensional Dynamics, PBCT ACT-17, CONICYT, Chile, and the \emph{Centre National de la Recherce Scientifique} (CNRS), France, \emph{unit{\'e}} FR2291 FRUMAM} 
\address{$\dag$ Juan Rivera-Letelier, Facultad de Matem{\'a}ticas, Campus San Joaqu{\'\i}n, P.~Universidad Cat{\'o}lica de Chile, Avenida Vicu{\~n}a Mackenna~4860, Santiago, Chile}
\email{riveraletelier@mat.puc.cl}

\begin{abstract}
We characterize two of the most studied non-uniform hyperbolicity conditions for rational maps, semi-hyperbolicity and the topological Collet-Eckmann condition, in terms of the maximal entropy measure.

Using the same tools in the proof of these results we give an extension of a result of Carleson, Jones and Yoccoz, that semi-hyperbolicity characterizes those polynomial maps whose basin of attraction of infinity is a John domain, to rational maps having a completely invariant attracting basin.
\end{abstract}
\subjclass[2000]{Primary:  37F10; Secondary: 30D05, 37D25, 30C20}
\keywords{Non-uniform hyperbolicity, maximal entropy measure, Julia set, doubling measure}

\maketitle

%
%

\section{Introduction}
Two of the most studied non-uniform hyperbolicity conditions for complex rational maps can be formulated in topological terms.
The first is ``semi-hyperbolicity'', that was introduced by Carleson, Jones and Yoccoz to characterize those complex polynomials whose basin of attraction of infinity is a John domain, see~\cite{CarJonYoc94}.
The second is the ``Topological Collet-Eckmann'' condition, that was introduced in the context of rational maps by Przytycki and Rohde in~\cite{PrzRoh98}.
Graczyk and Smirnov~\cite{GraSmi98} and Przytycki~\cite{Prz00} showed that this condition characterizes those polynomials whose basin of attraction of infinity is a H{\"o}lder domain.

In this paper we characterize each of these conditions in terms of the maximal entropy measure.
Recall that each rational map of degree at least two possesses a unique invariant probability measure of maximal entropy and that this measure is supported on the Julia set of the rational map~\cite{FreLopMan83,Man83,Lju83}.
For a polynomial, the maximal entropy measure coincides with the harmonic measure of its Julia set.

To state our main results, let~$f$ be a rational map of degree at least two and fix a small radius~$r > 0$.
For a point~$x$ in the Riemann sphere~$\CC$ and an integer~$m \ge 1$ we define the \emph{semi-local degree of~$f^m$ at~$x$} as follows.
Let~$W$ be the connected component of~$f^{-m}(B(f^m(x), r))$ containing~$x$.
Then~$f^m : W \to B(f^m(x), r)$ is a ramified covering and the semi-local degree of~$f^m$ at~$x$ is by definition the degree of this map.

The rational map~$f$ is said to be \emph{semi-hyperbolic}, if for a sufficiently small~$r > 0$ there is a constant~$D \ge 1$, such that for each integer $m \ge 1$ the semi-local degree of~$f^m$ at each point of~$J(f)$ is less than or equal to~$D$.

Recently, Ha{\"{\i}}ssinsky and Pilgrim showed that the measure of maximal entropy of a semi-hyperbolic rational map is doubling on the Julia set, see~\cite[Proposition~$4.2.9$]{HaiPil0612}.
Recall that a Borel measure~$\rho$ on a metric space $(X, \dist)$ is said to be doubling, if there are constants~$C_* > 0$ and $r_* > 0$ such that for each~$x \in X$ and~$r \in (0, r_*)$ we have
$$ \rho(B(x, 2r)) \le C_* \rho(B(x, r)). $$
Our first result is that in fact this property of the maximal entropy measure characterizes semi-hyperbolicity.
\begin{theoalph}\label{t:semi-hyperbolicity}
A complex rational map of degree at least two is semi-hyperbolic if and only if its maximal entropy measure is doubling on the Julia set.
\end{theoalph}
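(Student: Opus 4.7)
The implication ``$f$ semi-hyperbolic $\Rightarrow$ $\mu$ doubling on $J(f)$'' is the Ha\"{\i}ssinsky--Pilgrim result cited in the introduction, so the task is the converse: \emph{if $f$ is not semi-hyperbolic then $\mu$ is not doubling}. I would argue by contrapositive, using the pull-back identity $f^*\mu = d\mu$ (equivalently $\mu(W) = \ell\,d^{-m}\,\mu(B)$ whenever $W$ is a component of $f^{-m}(B)$ over a topological disk $B$ with $\deg(f^m|_W) = \ell$) to translate degree growth into measure contrast.

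Failure of semi-hyperbolicity yields, for any small $r>0$, sequences $x_k \in J(f)$, $m_k \ge 1$ and components $W_k \ni x_k$ of $f^{-m_k}(B(y_k, r))$, $y_k := f^{m_k}(x_k)$, with $\ell_k := \deg(f^{m_k}|_{W_k}) \to \infty$. Since $W_k$ is a topological disk, Riemann--Hurwitz forces the total local multiplicity of critical points of $f^{m_k}$ inside $W_k$ to equal $\ell_k - 1$; either some $c_k \in W_k \cap J(f)$ has local multiplicity $\nu_k + 1 \to \infty$ along a subsequence, or the number of critical points in $W_k$ blows up (or both). In the former case, the normal form $f^{m_k}(z) \approx f^{m_k}(c_k) + (z - c_k)^{\nu_k+1}$ around $c_k$ identifies, at a suitable scale $\rho_k$, the balls $B(c_k, \rho_k)$ and $B(c_k, \rho_k/2)$ as preimages of $B(y_k^*, t_k)$ and $B(y_k^*, t_k/2^{\nu_k+1})$ through $c_k$, with $y_k^* := f^{m_k}(c_k)$, $t_k := \rho_k^{\nu_k+1}$, each of degree $\nu_k+1$. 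The pull-back formula then yields
\[
\frac{\mu(B(c_k, \rho_k))}{\mu(B(c_k, \rho_k/2))} \;=\; \frac{\mu(B(y_k^*, t_k))}{\mu(B(y_k^*, t_k / 2^{\nu_k+1}))}.
\]
Doubling would bound the left-hand side by a constant $K$, while the lower-dimension estimate forced by a doubling measure fully supported on $J(f)$ makes the right-hand side grow like $2^{s_0 (\nu_k+1)}$ for some $s_0 > 0$ (after passing to a subsequential limit $y_k^* \to y_\infty^* \in J(f)$), a contradiction for large $k$. The second case, where multiplicities are bounded but the number of critical points diverges, is handled by a Zalcman-type rescaling around an accumulation point of these critical points, extracting a non-trivial rational limit with a non-simple branched structure in which the same pull-back/doubling conflict becomes transparent.

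\textbf{Main obstacle.} The delicate step is quantitative: the local normal form around $c_k$ is only valid up to the ``injectivity radius'' of $f^{m_k}$ at $c_k$, which may be small due to other critical points of $f^{m_k}$ clustering nearby, and simultaneously the base scale $t_k = \rho_k^{\nu_k+1}$ must remain in a range where the doubling-derived lower bound on $\mu(B(y_k^*, \cdot))$ is genuinely effective. Balancing these competing scales --- via a pigeonhole selection of an ``isolated'' critical point of $f^{m_k}$ in $W_k$, and/or a careful Zalcman rescaling to a limit branched model where the contradiction is explicit --- together with ensuring the chosen critical point lies on $J(f)$ (which is where the recurrent critical orbits producing the failure of semi-hyperbolicity live) is the main technical ingredient of the proof.
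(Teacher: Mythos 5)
Your reduction via Riemann--Hurwitz splits into two cases, but the first one --- some critical point $c_k$ of $f^{m_k}$ in $W_k$ having local multiplicity $\nu_k+1 \to \infty$ --- is vacuous for a fixed rational map $f$. A critical point $c$ in $J(f)$ cannot be periodic (it would be superattracting, hence in the Fatou set), and by the same reasoning its forward orbit can visit any given critical point of $f$ at most once; so the local degree of $f^n$ at any point of $J(f)$ is bounded uniformly in $n$ by $\widehat{\ell}_{\max}(f) \le 2^{2\deg(f)-2}$. This is exactly the quantity $\widehat{\ell}_{\max}(f)$ defined in \S\ref{ss:critical points}, and it is \emph{finite}. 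Consequently the failure of semi-hyperbolicity always forces the number of critical points of $f^{m_k}$ in $W_k$ to blow up, not their individual multiplicities. That means the entire burden of the argument falls on your ``second case,'' which you dispose of with an appeal to a Zalcman-type rescaling ``in which the same pull-back/doubling conflict becomes transparent.'' This is precisely the point at which a genuine argument is needed, and the proposal does not supply one: a rescaling limit of a single rational map around an accumulation of critical points does not obviously produce a branched model with a measure to which the doubling/pull-back tension applies, and the ``transparency'' is asserted, not shown.

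The paper's route is structurally quite different and avoids this trap. It first observes that doubling implies the lower bound $\rho_f(B(x,r)) \ge C r^{\alpha}$ (Lemma~\ref{l:doubling is regular}), hence, by Theorem~\ref{t:TCE}, that $f$ satisfies TCE; TCE rules out parabolic cycles and (by~\cite{PrzRiv07}) gives arbitrarily small nice couples of arbitrarily large modulus. Then, using Yin's extension of Carleson--Jones--Yoccoz, one only needs to rule out a recurrent critical point $c_0 \in J(f)$. The heart of the proof is Lemma~\ref{l:thin unramified annulus}, which --- precisely because individual local degrees are bounded by $\widehat{\ell}_{\max}(f)$ while the composite degree $d_n$ grows at least geometrically --- lets one pigeonhole a pull-back time at which an annulus around $c_0$ maps onto a round annulus as an \emph{unramified} covering of degree pinched between $N$ and $\widehat{\ell}_{\max}(f) N$. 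The roundness (via the Koebe-type Lemma~\ref{l:middle is round}) and the inverse-doubling estimate (Lemma~\ref{l:inverse doubling}, part~2) then give a concrete pair of concentric balls about $c_0$ whose $\rho_f$-masses differ by more than the doubling constant allows. Your proposal identifies the right heuristic (degree growth should create measure contrast incompatible with doubling), but it neither uses the finiteness of $\widehat{\ell}_{\max}(f)$ --- which is what makes the pigeonhole in Lemma~\ref{l:thin unramified annulus} work --- nor supplies the geometric control (nice couples, roundness) needed to turn the heuristic into an inequality. The ``main obstacle'' you name at the end is real, and in the paper it is resolved exactly by the nice-couple machinery plus the pigeonhole on degrees, not by a normal form or a rescaling.

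A smaller but still substantive issue: even in the (vacuous) first case, the pull-back formula of Fact~\ref{f:Jacobian} applies to connected components of preimages, which are topological disks but not metric balls; to compare with doubling one needs a distortion statement placing such a preimage between two round balls of comparable radius. The paper handles this with Lemma~\ref{l:middle is round} applied on an unramified annulus; your sketch applies a normal form directly at a critical point, where no such Koebe control is available, and the ``suitable scale $\rho_k$'' is not pinned down.
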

Combining this result with the main result of~\cite{CarJonYoc94}, we obtain that if a John domain is the basin of attraction of infinity of a complex polynomial, then its harmonic measure is doubling on the boundary.
This result should be compared with a result of Kim and Langmeyer~\cite[Theorem~$2.3$]{KimLan98}, that a bounded Jordan domain is a John domain if and only if its harmonic measure is doubling on the boundary.
Note however that there are planar simply-connected John domains for which the harmonic measure is not doubling on the boundary~\cite{BalVol96a}.

A rational map~$f$ satisfies the \emph{Topological Collet-Eckmann (TCE) condition}, if for some~$r > 0$ there are constants $D \ge 1$ and~$\theta \in (0, 1)$ such that the following property holds.
For each~$x \in J(f)$ the set~$G_x$ of those integers~$m \ge 1$ for which the semi-local degree of~$f^m$ at~$x$ is less than or equal to~$D$ satisfies,
$$ \liminf_{n \to + \infty} \frac{1}{n} \# (G_x \cap \{ 1, \ldots, n \}) \ge \theta. $$
Clearly, every semi-hyperbolic rational map satisfies the TCE condition.
\begin{theoalph}\label{t:TCE}
Let~$f$ be a complex rational map of degree at least two and let~$\rho_f$ be the maximal entropy measure of~$f$.
Then~$f$ satisfies the \TCEC{} if and only if there are constants~$r_0 > 0$, $\alpha > 0$ and~$C > 0$ such that for all~$x \in J(f)$ and~$r \in (0, r_0)$ we have
$$ \rho_f(B(x, r)) \ge C r^{\alpha}. $$
\end{theoalph}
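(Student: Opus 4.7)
My plan is to prove the two implications separately. In both cases the main tools will be the Exponential Shrinking of Components (ExpShrink) equivalent of TCE due to Przytycki--Rivera-Letelier--Smirnov, and the Jacobian relation for the maximal entropy measure: on any pull-back $W$ of a simply connected set $V$ through a branched cover $f^m : W \to V$ of degree $\delta$, one has $\rho_f(W) = \delta d^{-m} \rho_f(V)$, where $d = \deg f$. Below I write $D_m(x)$ for the semi-local degree of $f^m$ at $x$ with respect to a small radius $r_0$.

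\emph{Sufficiency.} Assume $f$ satisfies TCE. Combining TCE with ExpShrink and Koebe distortion on the resulting bounded-degree pull-backs, I obtain $\lambda > 0$, $\theta > 0$, $D \geq 1$, and $r_0 > 0$ such that for every $x \in J(f)$ the set
$$ \tilde G_x := \{ m \geq 1 : D_m(x) \leq D \text{ and } |Df^m(x)| \geq e^{\lambda m} \} $$
has lower density at least $\theta$. Given $x \in J(f)$ and $r > 0$ small, I pick $m \in \tilde G_x$ in an interval $[M, M + C_1]$ with $M \asymp \lambda^{-1} \log(1/r)$ and $C_1 \asymp 1/\theta$, and let $W$ be the connected component of $f^{-m}(B(f^m(x), r_0))$ containing $x$. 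Koebe distortion on the bounded-degree map $f^m : W \to B(f^m(x), r_0)$ gives $\diam(W) \leq K |Df^m(x)|^{-1} r_0 \leq r$, so $W \subset B(x, r)$. The Jacobian relation then yields
$$ \rho_f(B(x, r)) \geq \rho_f(W) \geq \frac{1}{d^m} \rho_f(B(f^m(x), r_0)) \geq c_0 d^{-m} \geq C r^{\alpha}, $$
with $\alpha = \log d / \lambda$ and $c_0 := \inf_{y \in J(f)} \rho_f(B(y, r_0)) > 0$ (positive because $J(f) = \supp \rho_f$ is compact).

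\emph{Necessity.} Assume now that $\rho_f(B(x, r)) \geq C r^{\alpha}$ for all $x \in J(f)$ and $r \in (0, r_0)$. I will establish ExpShrink: for some $\lambda', C' > 0$, every component $W$ of $f^{-n}(B(y, r_0))$ with $y \in J(f)$ satisfies $\diam(W) \leq C' e^{-\lambda' n}$. Letting $\delta$ be the degree of $f^n : W \to B(y, r_0)$, the Jacobian relation provides the upper bound $\rho_f(W) \leq \delta/d^n$. In the bounded-degree regime $\delta \leq D$, Koebe distortion ensures that $W$ contains a ball $B(z, c_1 \diam(W))$ for some $z \in W \cap J(f)$, and combining the hypothesis with this upper bound on $\rho_f(W)$ yields
$$ C (c_1 \diam(W))^{\alpha} \leq \rho_f(B(z, c_1 \diam(W))) \leq \rho_f(W) \leq D / d^n, $$
whence $\diam(W) \leq C'' d^{-n/\alpha}$, as required.

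\emph{Main obstacle.} The core difficulty is the case of pull-back components of unbounded degree, for which Koebe's distortion estimate degrades and the inradius of $W$ is no longer comparable to its diameter. My strategy here is to subdivide the target disk $B(y, r_0)$ using a finite $\rho_0$-net in $J(f)$ for a fixed small $\rho_0$, work with the pull-backs of the corresponding sub-disks inside $W$, and use the $\rho_f$-mass lower bound at scale $\rho_0$ together with the Jacobian relation to bound both the number and the total diameter of these sub-pull-backs. Iterating this subdivision a controlled number of times reduces the high-degree case to the bounded-degree one while losing only a constant factor in the exponential rate at each step. Making this reduction quantitative and uniform in $\delta$ and $n$ is the key technical step.
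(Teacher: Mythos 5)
Your sufficiency argument is essentially correct, though more roundabout than necessary: the paper simply invokes the ExpShrink form of TCE, which gives $\diam(W) \le \lambda^{-m}$ for every component $W$ of $f^{-m}(B(f^m(x), r_0))$ with no bounded-degree or Koebe step, and then applies the Jacobian relation $\rho_f(W) = D_m(x)\,d^{-m}\rho_f(B(f^m(x), r_0)) \ge d^{-m}\rho_f(B(f^m(x), r_0))$, which needs only $D_m(x) \ge 1$. Your derivation of the set $\tilde G_x$ (combining bounded degree with a derivative lower bound) is extra machinery that also requires justification beyond the bare TCE definition, so the paper's route is both shorter and self-contained.

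The necessity direction is where the real problem lies, and you have correctly identified it yourself: your plan to establish ExpShrink directly fails in the unbounded-degree case, and the ``subdivision and iteration'' strategy you sketch in the final paragraph is not a proof but a statement of the difficulty. Making the loss ``only a constant factor at each step'' while the number of steps is itself unbounded, and getting a rate uniform in $\delta$ and $n$, is precisely the kind of argument that tends not to close; nothing in your outline controls how the degree distributes among the sub-pull-backs, nor why the iteration terminates with a uniform count. The paper sidesteps the entire issue by using a different equivalent formulation of TCE, namely \emph{uniform hyperbolicity on periodic orbits}: it suffices to bound $|(f^n)'(p)|$ from below for repelling periodic points $p$ of period $n$. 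For such $p$ the local inverse $\varphi$ of $f^n$ at $p$ is univalent, so the pull-backs $\varphi^k(B(p,r_1))$ are degree-one and the Jacobian relation gives an exact scaling $\rho_f(\varphi^k(B(p,r_1))) = \deg(f)^{-kn}\rho_f(B(p,r_1))$. Since linearization at the repelling point yields $B(p, C_1|(f^n)'(p)|^{-k}) \subset \varphi^k(B(p,r_1))$, the hypothesis $\rho_f(B(\cdot,r)) \ge Cr^\alpha$ forces $\deg(f)^{-kn}\rho_f(B(p,r_1)) \ge CC_1^\alpha |(f^n)'(p)|^{-k\alpha}$ for all $k$, whence $|(f^n)'(p)| \ge \deg(f)^{n/\alpha}$. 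The unbounded-degree obstruction never arises because periodic orbits provide degree-one pull-backs for free. You should replace your ExpShrink strategy in the necessity direction with this periodic-orbit argument; as written, the necessity half of your proof has a genuine gap.
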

This result adds yet another characterization of the TCE condition to those given in~\cite{PrzRivSmi03}.
See also~\cite[Corollary~$1.1$]{PrzRiv07}.

We determine the optimal constant~$\alpha$ appearing in 
the statement of this theorem, see Remark~\ref{r:optimal constant}.

\subsection{Semi-hyperbolicity and John domains}
\label{ss:furtherresults}
Our final result is an extension to rational maps of the result of Carleson, Jones and Yoccoz for polynomials.
It is not directly related to the previous results, but we have included it here as its proof uses some of the tools developed to prove Theorem~\ref{t:semi-hyperbolicity}.
Note that for a polynomial the basin of attraction of infinity is completely invariant.
\begin{theoalph}\label{t:John domains}
Let~$f$ be a rational map of degree at least two having a completely invariant attracting basin~$\cA$.
Then~$\cA$ is a John domain if and only if~$f$ is semi-hyperbolic.
\end{theoalph}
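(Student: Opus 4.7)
The plan is to adapt the strategy of Carleson, Jones and Yoccoz~\cite{CarJonYoc94} from the polynomial setting to a rational map~$f$ with a completely invariant attracting basin~$\cA$. Since~$\cA$ is both attracting and completely invariant, it is the immediate basin of an attracting fixed point~$p$, and~$\partial \cA = J(f)$. The backbone of both implications is a nested exhaustion $\cA = \bigcup_{n \ge 0} f^{-n}(D_0)$ by pre-images of a suitable topological disk $D_0 \ni p$, chosen so that $\overline{f(D_0)} \subset D_0$ in the geometrically attracting case, and so that $D_0$ plays the role of a B{\"o}ttcher fundamental neighborhood in the super-attracting case.

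For the direction that semi-hyperbolicity implies $\cA$ is a John domain, I take $p$ as the John center and show that any $x \in \cA$ admits a John arc to $p$. Writing $n(x)$ for the first moment at which $f^{n(x)}(x) \in D_0$, the arc is built as the concatenation of a short arc inside $D_0$ together with $n(x)$ arcs crossing the successive shells $f^{-(n-1)}(D_0) \setminus f^{-n}(D_0)$. Each connected component $W$ of such a shell maps to the corresponding shell in $D_0$ as a ramified cover of degree bounded by the semi-hyperbolicity constant. The key input, already exploited in the proof of Theorem~\ref{t:semi-hyperbolicity}, is that bounded-degree pullbacks of disks are \emph{quasi-round}: $\diam(W)$ is comparable to $\dist(W,J(f))$. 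Transporting this quasi-roundness along the constructed arcs yields the John inequality, with constant depending only on the semi-hyperbolicity constant and on the local behaviour of $f$ near $p$.

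For the converse, assume $\cA$ is a John domain but, toward a contradiction, that $f$ fails to be semi-hyperbolic. Then there exist $x_k \in J(f)$ and integers $m_k \ge 1$ such that the semi-local degree of $f^{m_k}$ at $x_k$ tends to $+\infty$. Pick a John arc $\gamma_0 \subset \cA$ from $p$ to a point at a definite distance from $J(f)$ near $f^{m_k}(x_k)$ and lift it through the branch of $f^{-m_k}$ landing at $x_k$. Standard branched-cover distortion estimates show that, as the lifted arc approaches $x_k$, its inner radius in $\cA$ shrinks faster than its length because the local degree at the end-point is unbounded; this violates the John inequality for $k$ sufficiently large.

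The principal obstacle is that, in contrast to the polynomial case where the attracting fixed point at $\infty$ admits a B{\"o}ttcher coordinate disk disjoint from other critical values, a super-attracting fixed point $p$ of a rational map may coexist with intricate behaviour of the remaining critical orbits inside $\cA$. One must therefore select $D_0$ and the exhaustion so that (i)~iterates of $D_0$ under $f|_\cA$ collapse uniformly to $p$, and (ii)~pull-backs of the shells retain the uniform bounded-degree property of semi-hyperbolicity across all scales. It is precisely here that the quantitative pull-back and distortion estimates developed earlier in the paper enter, allowing the Carleson--Jones--Yoccoz scheme to run in the rational setting.
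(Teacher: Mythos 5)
Your proposal takes a genuinely different route from the paper, and I want to flag that the converse direction as you sketch it has a real gap.

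The paper's proof is much shorter and leans entirely on existing machinery plus one new lemma. For the forward direction it simply cites Mihalache's theorem that every Fatou component of a semi-hyperbolic rational map is a John domain, rather than rebuilding John arcs by hand. For the converse, the paper never tries to derive a contradiction directly from unbounded semi-local degree. Instead it observes that a John domain has porous boundary and is a H\"older domain; it invokes the Graczyk--Smirnov and Przytycki--Rivera-Letelier--Smirnov results to upgrade the H\"older property to the TCE condition; TCE rules out parabolic cycles and, via~\cite{PrzRiv07}, provides arbitrarily small nice couples of large modulus; then Lemma~\ref{l:non-porosity} (the one piece of new work in this section) shows that a recurrent critical point in~$J(f)$ forces~$J(f)$ not to be porous there, contradicting the John property. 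Yin's characterization of semi-hyperbolicity (no parabolics, no recurrent critical points in~$J(f)$) then closes the argument. In contrast you are attempting to adapt the original Carleson--Jones--Yoccoz scheme: construct John arcs through shells of $f^{-n}(D_0)$ for one implication, and lift John arcs by a high-degree branch for the other. That is a legitimately more self-contained plan in principle, and your forward direction is plausible (it essentially re-derives Mihalache's result, which is nontrivial but doable from quasi-roundness of bounded-degree pull-backs).

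The gap is in your converse. Unbounded semi-local degree of~$f^{m_k}$ at~$x_k$ does \emph{not} mean there is a single branch point of high local multiplicity near~$x_k$; the degree typically accumulates over many iterates as the orbit of~$x_k$ passes near different critical points (or near the same one recurrently) at many distinct scales. In that situation there is no local model of the form~$z \mapsto z^M$ with~$M$ large near~$x_k$, and ``standard branched-cover distortion estimates'' give you no control: the pull-back of a round disk can be a long, distorted, multiply-folded region whose geometry depends on the entire critical history, not on a single local degree. Showing that this spoils the John inequality is precisely the hard analytic content, and it is what the paper sidesteps by first establishing TCE (which controls the diameters of such pull-backs exponentially) and then isolating the geometry near a recurrent critical point with nice couples, where Lemma~\ref{l:thin unramified annulus} does produce a clean annular cover of controlled degree. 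Without some substitute for that control --- for instance TCE, or exponential shrinking of components --- the lifting argument you describe does not close. I would also push back gently on the obstacle you highlight: critical orbits coexisting with a super-attracting~$p$ inside~$\cA$ already occur for polynomials with disconnected Julia sets and are handled by CJY; the genuine new difficulty in the rational setting is the loss of the B\"ottcher/Green's-function structure at infinity, which your sketch does not address.
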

There are simple examples showing that the hypothesis that the completely invariant Fatou component~$\cA$ is an attracting basin is necessary, see~\S\ref{ss:notes}.

One of the implications of this theorem is given by an extension of one of the results of Carleson, Jones and Yoccoz, shown by Mihalache in~\cite{Mih0803}: each Fatou component of a semi-hyperbolic rational map is a John domain, see also~\cite{Yin99} for the case of connected Julia sets.
See~\S\ref{ss:notes} for several examples of rational maps showing that the converse of this last result does not hold in general.

To prove the reverse implication we use the (straightforward) fact that every John domain is porous.
Recall that a subset~$J$ of~$\C$ is \emph{porous} if there is~$\xi \in (0, 1)$ such that the following property holds: for each sufficiently small~$r > 0$ and each~$x \in J$ there is $y \in B(x, r)$ such that the ball~$B(y, \xi r)$ is disjoint from~$J$.
The key step in the proof is to show that if~$\cA$ is porous, then there are no recurrent critical points in the Julia set (Lemma~\ref{l:non-porosity}).
Then we conclude using the extension to rational maps of one of the results of Carleson, Jones and Yoccoz~\cite{CarJonYoc94}, given by Yin in~\cite{Yin99}: a rational map is semi-hyperbolic if and only if it has neither parabolic periodic points nor recurrent critical points in the Julia set.
An important preliminary step in the proof of Theorem~\ref{t:John domains} is to show that the map satisfies the TCE condition.
This follows from the fact that each John domain is a H{\"o}lder domain and from results in~\cite{GraSmi98,PrzRivSmi03}.
In fact we show the following stronger version of Theorem~\ref{t:John domains}: $\cA$ is a H{\"o}lder domain with a porous boundary if and only if~$f$ is semi-hyperbolic.

We end the introduction with a question, formulated with the intent of understanding further the connection between the geometry of Julia sets and the non-uniform expansion of the corresponding maps.
As remarked above the Julia set of a semi-hyperbolic polynomial is porous.
However, there are polynomials having a porous Julia set that are not semi-hyperbolic.
See~\cite{PrzUrb01,Yin00} for an example with a parabolic periodic point and~\cite[Theorem~$4.1$]{McM98} for one with a Siegel disk.

It would be interesting to know if the following variant of porosity characterizes semi-hyperbolicity.
We say that a subset~$K$ of~$\C$ is \emph{boundary porous} if there is a constant~$\xi \in (0, 1)$ such that for each sufficiently small $r > 0$ and each~$x \in \partial K$ there is~$y \in B(x, r)$ such that the ball~$B(y, \xi r)$ is disjoint from~$K$.
\begin{ques}\label{q:porosity}
Let~$f$ be a polynomial whose filled-in Julia set is boundary porous.
Is~$f$ semi-hyperbolic?
\end{ques}

\subsection{Notes and references}
\label{ss:notes}
Each doubling measure satisfies the property described in Theorem~\ref{t:TCE}, see Lemma~\ref{l:doubling is regular}.
An analogous upper bound holds for the maximal entropy measure of each rational map~$f$: there are $C' > 0$, $\alpha' > 0$ such that for all $x \in J(f)$ and $r > 0$ we have
$$ \rho_f(B(x, r)) \le C' r^{\alpha'}, $$
see for example~\cite[Lemma~$4$]{PrzUrbZdu91}.

See~\cite[Theorem~$4.2.3$, Theorem~$4.2.8$]{HaiPil0612} and~\cite[Theorem~$8.1$]{LyuMin97} for other characterizations of semi-hyperbolic rational maps and~\cite{BalVol96a} for a refinement of the result of Carleson, Jones and Yoccoz.
Mihalache showed in~\cite{Mih0803} that each Fatou component of a semi-hyperbolic rational map is a John domain with a uniform constant, see also~\cite[Theorem~$1.2$]{Yin99} for the case where the Julia set is connected.
 
There are several examples showing the converse of this last result does not hold in general.
Perhaps the simplest is the rational map~$R(z) = \frac{1}{z} - z$.
It is not semi-hyperbolic because~$z = \infty$ is a parabolic fixed point of~$R$.
On the other hand the Fatou set of~$R$ consists of the upper and lower half-plane, both of which are John domains in~$\CC$.
Since each of these components is completely invariant by~$R$, this example also shows that in Theorem~\ref{t:John domains} the hypothesis that the completely invariant Fatou component~$\cA$ is an attracting basin is necessary.
There are similar examples of any given degree, see for example~\cite{Yin99}.

Another interesting example, pointed out in~\cite{Mih0803}, is given by the mating of quadratic polynomials with a Siegel disk, that was studied by Yampolsky and Zakeri in~\cite{YamZak01}.
In fact, this rational map is not semi-hyperbolic as it has a Siegel disk and yet each of its Fatou components is a quasi-disk and hence a John domain.

To give a different class of examples we consider the following direct consequence of the results of Roesch in~\cite{Roe08}, see~\S\ref{s:John domains} for the proof.
\begin{fact}
\label{f:cubic Newton}
Let~$N$ be a twice renormalizable Newton method of a cubic polynomial with simple roots.
Suppose furthermore that~$N$ has no parabolic periodic points or Siegel disks.
Then the Fatou components of~$N$ are quasi-disks with a uniform constant.
\end{fact}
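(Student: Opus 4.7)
The plan is to unpack Fact~\ref{f:cubic Newton} as a direct consequence of Roesch's structure theorem for cubic Newton maps in~\cite{Roe08}. First I would set the stage: a Newton method $N$ of a cubic with simple roots is a rational map of degree three with three super-attracting fixed points (the roots) and exactly two free critical points. The hypothesis that $N$ has no parabolic periodic points and no Siegel disks rules out all non-hyperbolic Fatou components that are not super-attracting or attracting basins; in particular, by Sullivan's non-wandering theorem and the classification of periodic Fatou components, every Fatou component of $N$ is preperiodic to an attracting (or super-attracting) basin.

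The main step is to apply Roesch's analysis to the immediate basins. Under her framework, the assumption that $N$ is twice renormalizable pins down the combinatorial type of the map to one of the configurations studied in~\cite{Roe08}, and for those configurations Roesch establishes that the immediate basins of the three roots, as well as any immediate basins of other attracting cycles that may appear through renormalization, are quasi-disks with a constant depending only on the combinatorial data. The absence of parabolic points and Siegel disks eliminates the two kinds of degeneration that would otherwise destroy the quasi-disk property on the boundary: cusps at parabolic points and the irregular boundaries of Siegel disks. This yields a uniform quasi-conformal constant for all immediate basins.

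To propagate the quasi-disk property to the infinitely many preimage Fatou components with a \emph{uniform} constant, I would argue as follows. Every Fatou component $U$ of $N$ is mapped by some iterate $N^k$ univalently or with bounded degree onto an immediate basin. Since the only critical points lie in the super-attracting basins (Roesch's analysis places them there under the renormalization hypothesis), the orbit of any free critical point stays inside a specific immediate basin, so pullbacks of a small neighborhood of the boundary of an immediate basin along inverse branches of $N^k$ encounter critical points only in a controlled way, with total degree uniformly bounded by the product of local degrees in a fixed cycle of components. A Koebe-type distortion estimate for these bounded-degree branched covers then transfers the quasi-disk constant from the immediate basin to $U$ with only a bounded multiplicative loss.

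The main obstacle is justifying the uniformity of the quasi-conformal constant across infinitely many preimage components; this requires the bounded-degree control on pullbacks, which in turn requires knowing that the free critical points do not accumulate on the boundaries of the Fatou components in an uncontrolled way. This is precisely what the twice renormalizability together with the exclusion of parabolic points and Siegel disks buys us via Roesch's theorem, so the argument closes cleanly by citation once the combinatorial hypotheses are matched to those of~\cite{Roe08}.
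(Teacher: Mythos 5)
Your proposal contains a genuine error that undermines the central step. You claim that ``the only critical points lie in the super-attracting basins (Roesch's analysis places them there under the renormalization hypothesis), the orbit of any free critical point stays inside a specific immediate basin.'' This is false, and in fact contradicts the very purpose of Fact~\ref{f:cubic Newton}. A cubic Newton map has four critical points, three of which are the fixed roots and one of which is free (not ``exactly two free critical points'' as you wrote). If the free critical point were in an attracting basin, the map would be hyperbolic and the statement would be trivial. The paper explicitly introduces this Fact to produce non-hyperbolic examples: immediately after stating it, the paper notes that $N$ can be chosen to have a Cremer point, or a recurrent critical point in the Julia set. Thus the free critical point is typically in $J(N)$, and your bounded-degree pullback argument rests on a premise that fails precisely in the cases of interest.

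What the paper actually extracts from Roesch (\cite[proof of Proposition~8.3, 4)]{Roe08}) is that twice renormalizability forces the boundary $\partial B_i$ of each immediate basin to be \emph{disjoint from} $P$, the closure of the forward orbit of the free critical point. This disjointness, combined with connectedness of $J(N)$, gives that each $B_i$ is a quasi-disk. It also lets one fix a simply connected neighborhood $V_i$ of $\overline{B_i}$ disjoint from $P$; then for any other Fatou component $U$ with $f^n(U) = B_i$ and $n$ minimal, $f^n$ maps a neighborhood of $\overline{U}$ \emph{univalently} (not just with bounded degree) onto $V_i$, because the pullback branch avoids the free critical orbit entirely and avoids $z_i$ (which lies only in the component of $f^{-1}(V_i)$ containing $B_i$). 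Koebe distortion then gives the uniform constant directly. Your sketch of a ``bounded-degree branched-cover distortion estimate'' is vaguer and would require justification even on correct premises; the univalence the paper obtains sidesteps that entirely. You also omit the preliminary case split that the paper makes: if $N$ has an attracting cycle beyond the three roots it is hyperbolic and the result is standard, so one may assume the free critical point is not attracted to a cycle.
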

A direct consequence of this fact is that the Fatou components of~$N$ are John domains with a uniform constant.
Notice that the rational map~$N$ can be chosen so as to have a Cremer periodic point or to have no neutral cycles and a recurrent critical point in the Julia set.

\subsection{Acknowledgements}
\label{s:acknowledgments}
The main ideas of this paper came to the author after several discussions with Peter Ha{\"{\i}}ssinsky and Kevin Pilgrim on their recent monograph~\cite{HaiPil0612}.
I thank both of them for those stimulating conversations and for their useful comments and corrections to an earlier version of this paper.
Nicolae Mihalache read an earlier version very carefully and made several corrections.
Several discussions with him and with Feliks Przytycki and Mariusz Urbanski were useful.
My gratitude goes to all of them.

Finally, I would like to thank the \emph{Centre de Math{\'e}matiques et d'Informatique} of \emph{Universt{\'e} de Provence} for hospitality while part of this research was done.

\section{Preliminaries}
\label{s:preliminaries}

We endow~$\CC$ with the spherical metric, that we denote by~$\dist$.
Unless otherwise stated, distances, balls, diameters and derivatives, will be all taken with respect to the spherical metric.

Given a rational map~$f$, an integer~$m \ge 1$ and a subset~$V$ of~$\CC$, a connected component of~$f^{-m}(V)$ will be called a \emph{pull-back of~$V$ by~$f^m$}.
\subsection{Critical points}
\label{ss:critical points}
Given a complex rational map~$f$ we denote by~$\Crit(f)$ the set of critical points of~$f$ and by~$\Crit'(f)$ the set of those critical points of~$f$ which are in the Julia set.
We will say that~$f$ has \emph{critical connections} if there is an integer~$n \ge 1$ such that~$f^n(\Crit'(f)) \cap \Crit'(f) \neq \emptyset$.
We will say that a critical point~$c \in \Crit'(f)$ is \emph{exposed}, if for each integer~$n \ge 1$ we have~$f^n(c) \not \in \Crit(f)$.

We denote by~$\ell_{\max}(f)$ the maximal local degree of~$f$ at a critical point in~$J(f)$ and put
$$ \widehat{\ell}_{\max}(f) \= \max \{ \ell_{\max}(f^n) : n \ge 1 \}. $$
Note that~$\widehat{\ell_{\max}}(f) \le 2^{2 \deg(f) - 2}$ and that~$\widehat{\ell_{\max}}(f) = \ell_{\max}(f)$ in the case where~$f$ does not have critical connections.

\subsection{Maximal entropy measure}
\label{ss:maximal entropy measure}
As noted before, each rational map~$f$ of degree at least two has a unique measure of maximal entropy.
We will denote this measure by~$\rho_f$.
The topological support of~$\rho_f$ is equal to~$J(f)$ and the Jacobian of~$\rho_f$ is constant equal to~$\deg(f)$.
We will use several times the following property of~$\rho_f$.
\begin{fact}\label{f:Jacobian}
Let~$f$ be a rational map and let~$\rho_f$ be its maximal entropy measure.
Let~$V$ be an open and connected subset of~$\CC$, let $m \ge 1$ be an integer and let~$W$ be a pull-back of~$V$ by~$f^m$.
If we denote by~$D$ the degree of $f^m : W \to V$, then
$$ \rho_f(W) = D \deg(f)^{-m} \rho_f(V). $$
\end{fact}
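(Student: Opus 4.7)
The plan is to reduce to the constant Jacobian property of $\rho_f$ by removing the (finite) branch locus of the covering $f^m : W \to V$. Since the Jacobian of $\rho_f$ for $f$ is constantly $\deg(f)$, the chain rule gives the analogous statement for $f^m$: for every Borel set $B$ on which $f^m$ is injective, $\rho_f(f^m(B)) = \deg(f)^m \, \rho_f(B)$. Equivalently, for every Borel set $A$ on which $f^m$ is injective, $\rho_f(A) = \deg(f)^{-m} \rho_f(f^m(A))$.

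First I would let $C$ be the (finite) set of critical points of $f^m$ inside $W$, and set $V' := V \setminus f^m(C)$ and $W' := W \setminus (f^m)^{-1}(f^m(C))$. Since $\rho_f$ has no atoms (a classical property of the maximal entropy measure of a rational map of degree at least two) and $C$ is finite, $\rho_f(W) = \rho_f(W')$ and $\rho_f(V) = \rho_f(V')$. By definition of the degree of the ramified covering $f^m : W \to V$, the restriction $f^m : W' \to V'$ is an unramified $D$-to-one covering. A standard measurable selection argument -- cover $W'$ by countably many open sets on which $f^m$ is injective, and refine into disjoint pieces -- produces a Borel partition $W' = W'_1 \sqcup \cdots \sqcup W'_D$ with each $f^m : W'_i \to V'$ a bijection. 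Summing the Jacobian identity $\rho_f(W'_i) = \deg(f)^{-m} \rho_f(V')$ over $i$ then yields
$$ \rho_f(W) \;=\; \rho_f(W') \;=\; \sum_{i=1}^{D} \rho_f(W'_i) \;=\; D \deg(f)^{-m} \rho_f(V') \;=\; D \deg(f)^{-m} \rho_f(V), $$
as required.

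The only non-mechanical inputs are the constant Jacobian property of $\rho_f$ and the absence of atoms; both are classical for the measure of maximal entropy of a rational map of degree at least two. I expect no genuine obstacle: once the branch locus is removed, everything is bookkeeping around the covering structure, and the measurable selection is routine because $f^m$ is a local homeomorphism on $W'$.
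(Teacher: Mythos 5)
Your proof is correct and follows the same approach the paper indicates: the paper states that the fact is ``a direct consequence of the fact that the Jacobian of~$\rho_f$ is constant equal to~$\deg(f)$ and of the fact that~$\rho_f$ does not charge points,'' which are exactly the two inputs you invoke. You have simply spelled out the bookkeeping (removing the finite branch locus and its finite preimage, decomposing the unramified $D$-to-one cover into $D$ Borel sections, and summing the Jacobian identity) that the paper leaves implicit.
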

This property is a direct consequence of the fact that the Jacobian of~$\rho_f$ is constant equal to~$\deg(f)$ and of the fact that~$\rho_f$ does not charge points.

\subsection{Doubling measures}
\label{ss:doubling measures}
We will use the following property of doubling measures.
\begin{lemm}\label{l:doubling is regular}
Let~$(X, d)$ be a compact metric space and let~$\rho$ be a doubling measure on~$X$.
Then there are constants~$C > 0$ and~$\alpha > 0$ such that for each sufficiently small~$r > 0$ and each~$x \in X$ we have
$$ \rho(B(x, r)) \ge C r^{\alpha}. $$
\end{lemm}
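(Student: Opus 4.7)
The plan is to iterate the doubling inequality from scale $r$ up to a fixed scale $r_*$, and then use compactness to establish a uniform positive lower bound on $\rho(B(x, r_*))$ across $x \in X$. Combining these two ingredients yields the desired polynomial lower bound.

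First I would handle the uniform lower bound at the fixed scale $r_*$. Since $(X, d)$ is compact, it is covered by finitely many balls $B(y_1, r_*/2), \ldots, B(y_N, r_*/2)$. The triangle inequality gives $B(y_i, r_*/2) \subseteq B(x, r_*)$ whenever $d(x, y_i) < r_*/2$, so for every $x \in X$ there is some index $i = i(x)$ with $\rho(B(x, r_*)) \ge \rho(B(y_i, r_*/2))$. Setting $m \= \min_{1 \le i \le N} \rho(B(y_i, r_*/2))$, we obtain $\rho(B(x, r_*)) \ge m$, and $m > 0$ since $\rho$ has full topological support (implicit in the intended application, where $X = J(f)$ and $\rho = \rho_f$; otherwise the statement would fail, e.g.\ for a Dirac mass).

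Next I would iterate. Given $r \in (0, r_*)$, let~$k \ge 0$ be the largest integer with $2^k r \le r_*$, so in particular $2^{k+1} r > r_*$ and the doubling inequality applies at the scales $r, 2r, \ldots, 2^k r$ (all strictly less than $r_*$ when $C_* \ge 1$, which we may assume). Iterating yields
$$ \rho(B(x, r_*)) \le \rho(B(x, 2^{k+1} r)) \le C_*^{k+1} \rho(B(x, r)). $$
Combining with the lower bound from the first step, $\rho(B(x, r)) \ge m \, C_*^{-(k+1)}$.

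Finally I would convert $C_*^{k+1}$ into a polynomial factor in $r$. Since $2^k \le r_*/r$, we have $k \le \log_2(r_*/r)$, hence $C_*^{k+1} \le C_*\, (r_*/r)^{\log_2 C_*}$. Substituting and setting $\alpha \= \log_2 C_*$ and $C \= m \, C_*^{-1} \, r_*^{-\alpha}$ gives $\rho(B(x, r)) \ge C r^{\alpha}$ for all $x \in X$ and all $r \in (0, r_*)$, as required. There is no serious obstacle: the only point requiring care is the uniform positivity of $\rho$ at the fixed scale~$r_*$, which is where compactness and full support are used.
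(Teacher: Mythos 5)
Your proof is correct and follows essentially the same route as the paper: iterate the doubling inequality from scale~$r$ up to a fixed scale comparable to~$r_*$, then invoke a uniform positive lower bound on the measure of balls of that fixed radius, and finally convert the iteration count into a power of~$r$. The one place where you go beyond the paper is in making the compactness/finite-subcover argument for the uniform lower bound explicit and in observing that this step really uses full support of~$\rho$ (without which the lemma as stated can fail, e.g.\ for $\rho = \delta_0$ on the two-point space $\{0,1\}$ with $r_* < 1/2$); the paper simply posits the existence of such an~$\varepsilon$, relying on the application at hand where $X = J(f)$ and $\rho = \rho_f$ has full support, so your remark is a useful clarification rather than a different argument.
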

\begin{proof}
Let~$r_* > 0$ and~$C_* > 0$ be constants associated to the doubling property of~$\rho$ and let~$\varepsilon > 0$ be such that for each~$x \in X$ we have~$\rho(B(x, r_* / 2)) \ge \varepsilon$.
Given~$r \in (0, r_*)$, let~$n \ge 0$ be the unique integer such that~$2^n r < r_* \le 2^{n + 1}r$.
Then
$$ \rho(B(x, r))
\ge
C_*^{-n} \rho(B(x, 2^n r))
\ge
C_*^{-n} \varepsilon. $$
This shows the desired assertion with~$\alpha = \ln C_* / \ln 2$ and~$C = r_*^{-\alpha} \varepsilon$.
\end{proof}
A compact subset~$J$ of the Riemann sphere is \emph{uniformly perfect}, if there are~$\hat{\eta} > 1$ and~$\hat{r} > 0$ such that for each~$x \in J(f)$ and each~$r \in (0, \hat{r})$ the annulus~$B(x, \hat{\eta} r) \setminus B(x, r)$ intersects~$J$.
\begin{lemm}\label{l:inverse doubling}
Let~$J$ be a uniformly perfect compact subset of~$\CC$ and~$\rho$ a doubling measure supported on~$J$.
Then the following properties hold.
\begin{itemize}
\item[1.]
There are $\eta_0 > 1$, $\varepsilon_0 \in (0, 1)$ and~$r_0 > 0$ such that for each~$r \in (0, r_0)$ and~$x \in J$ we have
$$ \rho(B(x, \eta_0 r) \setminus B(x, r))
\ge
\varepsilon_0 \rho(B(x, r)). $$
\item[2.]
There are $\eta_1 > 1$ and~$r_1 > 0$ such that for each~$r \in (0, r_1)$ and~$x \in J$ we have
$$ \rho (B(x, \eta_1r))
\ge
2 \rho(B(x, r)). $$
\end{itemize}
\end{lemm}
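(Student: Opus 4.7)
The plan is to prove part~(1) first and derive part~(2) from it by a simple iteration. The key idea for part~(1) is that uniform perfectness provides a ``fat'' piece of $J$ sitting in an annulus around $x$, and the doubling property of $\rho$ then transfers a lower bound on the mass of $B(x,r)$ into a lower bound on the mass of that annular region.

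For part~(1), let $\hat\eta > 1$ and $\hat r > 0$ be the uniform perfectness constants of $J$, and restrict to $r$ so small that $\hat\eta r < \hat r$. Applying uniform perfectness at the scale $\hat\eta r$ produces a point $y \in J$ with $\hat\eta r \le d(x,y) < \hat\eta^2 r$. Setting $\delta = (\hat\eta - 1)/2$ and $\eta_0 = \hat\eta^2 + \delta$, a straightforward triangle inequality check shows that the entire ball $B(y, \delta r)$ lies in the annulus $B(x, \eta_0 r) \setminus B(x, r)$ (the lower bound $d(x,y) \ge \hat\eta r$, which is strictly larger than $r$, is what makes this possible). Since $y \in J$, I can then iterate the doubling inequality $\rho(B(y, 2s)) \le C_* \rho(B(y, s))$ a bounded number of times, depending only on $\hat\eta$ and $C_*$, to get
$$
\rho(B(x, r)) \le \rho(B(y, (\hat\eta^2 + 1)r)) \le C \rho(B(y, \delta r)) \le C \rho\bigl(B(x, \eta_0 r) \setminus B(x, r)\bigr),
$$
using the triangle inclusion $B(x, r) \subset B(y, (\hat\eta^2 + 1)r)$. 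This yields~$\varepsilon_0 = 1/C$.

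Part~(2) follows by iterating part~(1): rewriting the conclusion as $\rho(B(x, \eta_0 r)) \ge (1 + \varepsilon_0)\rho(B(x, r))$ and applying it successively at the radii $r, \eta_0 r, \dots, \eta_0^{k-1} r$ gives $\rho(B(x, \eta_0^k r)) \ge (1 + \varepsilon_0)^k \rho(B(x, r))$. Choosing $k$ so that $(1 + \varepsilon_0)^k \ge 2$, setting $\eta_1 = \eta_0^k$, and taking $r_1 > 0$ small enough that each step of the iteration stays within the validity range of part~(1) completes the argument.

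The only real technical care lies in the bookkeeping of the various radius thresholds, so that uniform perfectness applied at scale $\hat\eta r$ and the doubling condition applied at scale $(\hat\eta^2 + 1)r$ are both legitimate; there is no genuine obstacle, and once the geometric observation that an entire $\rho$-positive ball fits into the annulus $B(x, \eta_0 r) \setminus B(x, r)$ is made, the proof is essentially mechanical.
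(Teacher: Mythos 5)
Your argument is correct and follows essentially the same path as the paper: apply uniform perfectness to locate a point of $J$ at distance comparable to $r$ from $x$, observe that a suitably sized ball around that point sits inside the annulus, and transfer the mass of $B(x,r)$ to that ball by a bounded number of doubling steps; part~(2) is then the same iteration of part~(1). The only differences are cosmetic choices of constants (the paper places $x'$ in $B(x,2\hat\eta r)\setminus B(x,2r)$ and uses the ball $B(x',r)$, while you place $y$ in $B(x,\hat\eta^2 r)\setminus B(x,\hat\eta r)$ and use $B(y,\delta r)$), so this is the same proof.
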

\begin{proof}
\

\partn{1}
Let~$C_* > 0$ and~$r_* > 0$ be the constants associated to the doubling property of~$\rho$ and let~$\hat{\eta} > 1$ and $\hat{r} > 0$ be the constants associated to the uniform perfectness of~$J$.
Let~$n \ge 1$ be a sufficiently large integer such that~$2^n \ge 2(\hat{\eta} + 1)$.

Given~$x \in J$ and~$r \in (0, \min \{2^{-n} r_*, 2^{-1} \hat{r} \})$, let~$x' \in B(x, 2 \hat{\eta} r) \setminus B(x, 2r)$ be in~$J$.
Since~$\rho$ is doubling on~$J$ we have
$$ \rho(B(x', 2(\hat{\eta} + 1)r))
\le
\rho(B(x', 2^nr))
\le
C_*^n \rho(B(x', r)). $$
As~$B(x, r) \subset B(x', 2(\hat{\eta} + 1) r)$, we conclude that~$\rho(B(x', r)) \ge C_*^{-n} \rho(B(x, r))$.
On the other hand, using that~$x' \in B(x, 2\hat{\eta} r) \setminus B(x, 2r)$ we have
$$ B(x', r) \subset B(x, (2\hat{\eta} + 1) r) \setminus B(x, r) $$
and hence
$$ \rho(B(x, (2\hat{\eta} + 1) r) \setminus B(x, r))
\ge
C_*^{-n} \rho(B(x, r)). $$
This shows that the desired property holds with~$\eta_0 = 2\hat{\eta} + 1$, $\varepsilon_0 = C_*^{-n}$ and~$r_0 = \min \{2^{-n} r_*, 2^{-1} \hat{r} \}$.

\partn{2}
Let~$\eta_0 > 1$, $\varepsilon_0 \in (0, 1)$ and $r_0 > 0$ be given by part~1 and let~$n \ge 1$ be a sufficiently large integer such that~$(1 + \varepsilon_0)^n \ge 2$.
Using part~1 inductively, we obtain that for each~$r \in (0, \eta_0^{-n}r_0)$,
$$ \rho(B(x, \eta_0^n r))
\ge
(1 + \varepsilon_0)^n \rho(B(x, r))
\ge
2 \rho (B(x, r)). $$
This shows the desired property with~$\eta_1 = \eta_0^n$ and $r_1 = \eta_0^{-n} r_0$.
\end{proof}
\subsection{Distortion lemma}
\label{ss:distortion lemmas}
The following geometric lemma is a direct consequence of Koebe distortion theorem.
We omit the proof.
\begin{lemm}\label{l:middle is round}
Given~$R > r> 0$ put
$$ A(r, R) \= \{ z \in \C : r < |z| < R \}. $$
Then there are constants~$M > 1$ and~$\delta > 0$ such that for each univalent map
$$ \varphi : A(1, 8) \to \CC, $$
satisfying~$\diam(\varphi(A(1, 8))) < \delta$ and for each~$x$ enclosed by the image of~$\varphi$, we have
$$ \frac{\sup \{ \dist(x, y) : y \in \varphi(A(2, 4)) \} }{ \inf \{ \dist(x, y) : y \in \varphi(A(2, 4)) \}}
\le M. $$
\end{lemm}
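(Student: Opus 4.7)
The plan is to reduce to the Euclidean analogue, using the smallness of $\delta$, and then to exploit that $\overline{A(2,4)}$ is compactly contained in the domain $A(1,8)$ of the univalent map $\varphi$, so that Koebe-type distortion estimates apply with uniform constants. First I would choose $\delta$ small enough that, after post-composing $\varphi$ with a spherical isometry, the image $\varphi(A(1,8))$ lies in a fixed chart in $\C$ on which the spherical and Euclidean metrics are comparable up to a universal factor. It therefore suffices to prove the analogous estimate with the Euclidean metric in place of $\dist$.

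In that Euclidean setting, every $z \in A(2,4)$ satisfies $B(z,1) \subset A(1,8)$, so $\varphi$ is univalent on $B(z,1)$. Koebe's distortion theorem, applied on $B(z,1/2)$ and chained along finitely many overlapping disks of radius $1/2$ covering $\overline{A(2,4)}$, shows that $|\varphi'|$ is comparable on all of $A(2,4)$ to a single scale $\lambda \= |\varphi'(3)|$. Integrating along a path of bounded length gives $\diam \varphi(A(2,4)) \lesssim \lambda$, and in particular the Jordan curve $\gamma \= \varphi(\{|z| = 3\})$ has diameter $\lesssim \lambda$. Since $\gamma \subset \varphi(A(2,4))$ separates the bounded ``hole'' of $\varphi(A(1,8))$ from its unbounded complementary component, the point $x$, being enclosed by the image, lies in the Jordan domain bounded by $\gamma$. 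Thus $|x - \varphi(3)| \le \diam(\gamma) \lesssim \lambda$, and hence $\sup_{y \in \varphi(A(2,4))} |x - y| \lesssim \lambda$.

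The hard step is the matching lower bound on $\inf_{y \in \varphi(A(2,4))} |x - y|$. For this I would invoke the Koebe $1/4$ theorem on each $B(z, 1) \subset A(1, 8)$ with $z \in A(2, 4)$: it gives $\varphi(B(z, 1)) \supset B(\varphi(z), |\varphi'(z)|/4)$. Since this ball lies in $\varphi(A(1, 8))$, it is disjoint from the hole containing $x$, so $|x - \varphi(z)| \ge |\varphi'(z)|/4 \gtrsim \lambda$ for every $z \in A(2, 4)$. Combining the two bounds yields a universal ratio $M$ in the Euclidean metric, and the chartwise comparability of spherical and Euclidean metrics transports this back to the spherical setting, finishing the proof.
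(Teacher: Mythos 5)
Your proof is correct and is precisely the Koebe-distortion argument the paper alludes to; the paper in fact omits the proof, remarking only that the lemma is ``a direct consequence of Koebe distortion theorem.'' The chart reduction, the chained distortion estimate giving $|\varphi'|\asymp|\varphi'(3)|$ on $\overline{A(2,4)}$, the topological observation that an enclosed $x$ lies in the bounded Jordan domain cut off by $\varphi(\{|z|=3\})$, and the Koebe $1/4$ lower bound all fit together as written.
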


\section{TCE condition}
\label{s:TCE}
This section is devoted to the proof of Theorem~\ref{t:TCE}.
We will use the fact that the TCE condition is characterized by each of the following conditions, see~\cite{PrzRivSmi03}.
Let~$f$ be a rational map of degree at least two.

\begin{quote}
\textbf{Exponential shrinking of components (\ExpShrink{}).}
There are~$r_0 > 0$ and $\lambda > 1$ such that for each~$x \in J(f)$ and each integer~$m \ge 1$, each connected component~$W$ of~$f^{-m}(B(x, r))$ satisfies
$$ \diam(W) \le \lambda^{-m}. $$
\end{quote}
Recall that given an integer~$n \ge 1$, a periodic point~$p$ of period~$n$ of~$f$ is \emph{repelling} if~$|(f^n)'(p)| > 1$.
\begin{quote}
\textbf{Uniform hyperbolicity on periodic orbits.}
There is~$\lambda > 1$ such that for each integer~$n \ge 1$ and each repelling periodic point~$p$ of period~$n$ we have~$|(f^n)'(p)| \ge \lambda^n$.
\end{quote}
\begin{proof}[Proof of Theorem~\ref{t:TCE}]
Let~$f$ be a rational map satisfying the TCE condition and let~$r_0 > 0$ and $\lambda > 1$ be the constants given by the \ExpShrink{} condition.
Let~$\varepsilon > 0$ be sufficiently small so that for every~$x \in J(f)$ we have~$\rho_f(B(x, r_0)) > \varepsilon$.

Let~$x \in J(f)$ and $\widehat{r} \in (0, 1)$ be given and let~$n \ge 1$ be the integer such that~$\lambda^{-n} \le \widehat{r} < \lambda^{-(n - 1)}$.
Let~$W$ be the connected component of~$f^{-n}(B(f^n(x), r_0))$ containing~$x$ and let~$D \ge 1$ be the degree of~$f^n : W \to B(f^n(x), r_0)$.
Then, by Fact~\ref{f:Jacobian} we have
$$
\rho_f(W)
=
D \deg(f)^{-n} \rho_f(B(f^n(x), r_0))
\ge
\varepsilon \deg(f)^{-n}.
$$
By the \ExpShrink{} condition it follows that~$\diam(W) \le \lambda^{-n}$, so~$W \subset B(x, \widehat{r})$.
Thus, if we put~$\alpha \= \frac{ \ln \deg(f) }{ \ln \lambda }$ and $C \= \varepsilon \lambda^{- \alpha}$, then 
$$ \rho_f(B(x, \widehat{r}))
\ge
\varepsilon \deg(f)^{- n}
=
\varepsilon (\lambda^{- n})^\alpha
\ge 
C \widehat{r}^\alpha. $$
This shows the desired property of~$\rho_f$.

Suppose now that~$f$ is a rational map for which there are constants~$r_0 > 0$, $\alpha > 0$ and $C > 0$, such that for each $x \in J(f)$ and each~$r \in (0, r_0)$ we have
$$ \rho_f(B(x, r)) \ge C r^{\alpha}. $$
We will show that~$f$ is uniformly hyperbolic on periodic orbits.
As remarked above, this implies that~$f$ satisfies the TCE condition.
Let~$n \ge 1$ be an integer and let~$p$ be a repelling periodic point of~$f$ of period~$n$.
Then there is a local inverse~$\varphi$ of~$f^n$ which is defined on a neighborhood of~$p$ and which fixes~$p$.
Furthermore, if~$r_1 > 0$ is sufficiently small, then~$\varphi$ is defined on~$B(x, r_1)$ and there is a constant~$C_1 > 0$ such that for each~$k \ge 1$ we have
$$ B(p, C_1 |(f^n)'(p)|^{-k}) \subset \varphi^{k}(B(p, r_1)). $$
By Fact~\ref{f:Jacobian} we have
\begin{multline*}
\deg(f)^{-kn} \rho_f(B(p, r_1))
=
\rho_f(\varphi^{k}(B(p, r_1)))
\\ \ge
\rho_f(B(p, C_1 |(f^n)'(p)|^{-k}))
\ge
CC_1^\alpha |(f^n)'(p)|^{-k \alpha}.
\end{multline*}
Since this holds for every integer~$k \ge 1$, it follows that~$|(f^n)'(p)| \ge \deg(f)^{n / \alpha}$.
This shows that~$f$ is uniformly hyperbolic on periodic orbits with constant~$\lambda = \deg(f)^{1 / \alpha}$.
\end{proof}
\begin{rema}\label{r:optimal constant}
For a rational map~$f$ satisfying the TCE condition we will now determine the optimal constant~$\alpha$ in Theorem~\ref{t:TCE}.
For each integer $n \ge 1$ and each periodic point~$p$ of period~$n$, put
$$ \chi(p) = \frac{1}{n} \ln |(f^n)'(p)|, $$
and
$$
\chi_{\per}
\=
\inf \{ \chi(p) : p \text{ repelling periodic point of~$f$} \}.
$$
Then, in~\cite{PrzRivSmi03} it is shown that \ExpShrink{} holds for each~$\lambda \in (1, \exp(\chi_{\per}))$.

Thus, the proof of Theorem~\ref{t:TCE} gives that if~$\alpha$ is a constant for which the conclusion of this theorem holds, then~$\alpha \ge \ln \deg(f) / \chi_{\per}$.
On the other hand, we proved that the conclusion of Theorem~\ref{t:TCE} holds for each~$\alpha > \ln \deg(f) / \chi_{\per}$.
\end{rema}
\section{Semi-hyperbolicity}
\label{s:semi-hyperbolicity}
This section is devoted to the proof of Theorem~\ref{t:semi-hyperbolicity}.
The proof is based on Lemma~\ref{l:thin unramified annulus} below and makes use of the fact that a rational map satisfying the TCE condition has arbitrarily small ``nice couples'', as shown in~\cite{PrzRiv07}.
We will also use the fact that a rational map is semi-hyperbolic if and only if it has neither parabolic periodic points nor recurrent critical points in the Julia set.
This was shown by Carleson, Jones and Yoccoz in~\cite{CarJonYoc94} for polynomials and then it was extended to rational maps by~Yin in~\cite[Theorem~$1.1$]{Yin99}.

Throughout all this section we fix a complex rational map~$f$ of degree at least two.

An open neighborhood~$V$ of~$\Crit'(f)$ that is disjoint from the forward orbit of critical points not in~$J(f)$ is called a \emph{nice set for~$f$}, if for each integer~$n \ge 1$ we have~$f^n(\partial V) \cap V = \emptyset$ and if each connected component of~$V$ is simply-connected and contains precisely one element of~$\Crit'(f)$.
We say that a pair of nice sets $(\hV, V)$ is a \emph{nice couple for~$f$}, if~$\overline{V} \subset \hV$ and if for each integer~$n \ge 1$ we have~$f^n(\partial V) \cap \hV = \emptyset$.
We will say that a nice couple~$(\hV, V)$ for~$f$ is \emph{small}, if there is a small $r > 0$ such that $\hV \subset B(\Crit'(f), r)$.
Given topological disks~$U, \hU \subset \CC$ such that~$\overline{U} \subset \hU$, we define $\mmod(\hU;U)$ as the supremum of the modulus of those annuli that separate~$U$ and $\CC \setminus \hU$.
If~$f$ is a rational map and if~$(\hV, V)$ is a nice couple for~$f$, then we define the \emph{modulus of $(\hV, V)$} as
$$ \mmod(\hV; V) \= \min \{ \mmod (\hV^c; V^c) : c \in \Crit'(f) \}. $$

In the proof of Theorem~\ref{t:semi-hyperbolicity} we will use the fact that a rational map satisfying the TCE condition has arbitrarily small nice couples of arbitrarily large modulus, see~\cite[Proposition~$4.2$]{PrzRiv07}.
\begin{lemm}\label{l:thin unramified annulus}
Let~$f$ be a rational map of degree at least two having arbitrarily small nice couples of arbitrarily large modulus.
Then for each recurrent critical point~$c_0$ in~$J(f)$, $ \kappa \in (0, 1)$, $N \ge 2$ and each $r_* > 0$ there is~$c \in \Crit'(f)$, $r \in (0, r_*)$ and an integer~$m \ge 1$, such that~$f^m(c_0) \in B(c, r)$ and such that the pull-back~$\hU$ (resp.~$U$) of~$B(c, r)$ (resp.~$B(c, \kappa r)$) containing~$c_0$ satisfies the following properties.
\begin{enumerate}
\item[1.]
$\diam(\hU) < r_*$.
\item[2.]
The degree of~$f^m$ on~$\hU$ and the degree of~$f^m$ on~$U$ are the same.
\item[3.]
The set $A \= \hU \setminus \overline{U}$ is an annulus and the map
$$ f^m : A \to B(c, r) \setminus \overline{B(c, \kappa r)} $$
is a covering map whose degree is at least~$N$ and at most~$\widehat{\ell_{\max}}(f) N$.
\end{enumerate}
\end{lemm}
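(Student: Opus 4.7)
The plan is to construct $m$, $r$, $c$ via iterated pullback along the orbit of $c_0$ inside a small nice couple of large modulus.

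Using the hypothesis, I would fix a nice couple $(\hV, V)$ with $\hV \subset B(\Crit'(f), r_*/2)$ and $\mmod(\hV; V)$ large enough that, for each $c \in \Crit'(f)$, one can find a range of radii $r$ with $\hV^c \supset B(c, r) \supset \overline{B(c, \kappa r)} \supset V^c$; the existence of such round annuli inside the large-modulus annulus $\hV^c \setminus \overline{V^c}$ follows from an iterated application of Lemma~\ref{l:middle is round}. Let $R$ denote the first return map to $V$; each branch of $R$ has degree at most $\ell_{\max}(f) \le \widehat{\ell_{\max}}(f)$, since each component $V^{c'}$ contains only $c'$ as a critical point of $f$ in $J(f)$ and nice sets avoid forward orbits of critical points outside $J(f)$. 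By the recurrence of $c_0$, all iterates $R^k(c_0) = f^{m_k}(c_0)$ are defined; letting $B^{(k)}$ denote the branch of $R^k$ containing $c_0$ and $D_k \= \deg(R^k|_{B^{(k)}})$, multiplicativity yields $D_{k+1} \le \widehat{\ell_{\max}}(f) \cdot D_k$. The branch $B^*$ of $R$ containing $c_0$ is an open neighborhood of $c_0$ on which $R$ has degree $\ell(c_0) \ge 2$, so for all $k$ sufficiently large $R^k(c_0) \in B^*$ and each additional iterate contributes a factor $\ell(c_0)$, giving $D_k \to \infty$.

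I would then pick the smallest $k$ with $D_k \ge N$, so $D_k \in [N, \widehat{\ell_{\max}}(f) N]$, and set $m \= m_k$ and $c \= c^{(k)}$ (with $f^m(c_0) \in V^{c^{(k)}}$). Let $W_k$ (resp. $\hW_k$) denote the pull-back of $V^c$ (resp. $\hV^c$) by $f^m$ containing $c_0$. The nice couple condition forces $\hW_k \subset V^{c_0}$, and the niceness of $V$ together with the recurrence structure identifies $W_k$ with the first-return branch $B^{(k)}$, so that $\deg(f^m|_{W_k}) = D_k$. Choosing $r$ from the allowed range, the pull-backs $U$ and $\hU$ of $B(c, \kappa r)$ and $B(c, r)$ satisfy $W_k \subset U \subset \hU \subset \hW_k$.

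The crucial remaining step is to secure the covering property of the annulus. The critical values of $f^m|_{\hW_k}$ lying in $\hV^c$ form a finite set, namely certain forward iterates of $\Crit'(f)$ at intermediate times up to $m$. By adjusting $r$ within its allowed range so that no such critical value lies in the round annulus $B(c, r) \setminus \overline{B(c, \kappa r)}$, no critical point of $f^m$ falls in the pullback annulus $\hU \setminus \overline U$; consequently $f^m : \hU \setminus \overline U \to B(c, r) \setminus \overline{B(c, \kappa r)}$ is an unramified covering whose degree coincides with $\deg(f^m|_{W_k}) = D_k$, which delivers properties~2 and~3. Property~1 follows from $\hU \subset \hW_k \subset V^{c_0} \subset B(c_0, r_*/2)$. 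The main obstacle is the combinatorial bookkeeping: identifying $W_k$ with $B^{(k)}$ via the nice couple's rigidity, and executing the finite adjustment of $r$ that turns the round annulus into an unramified cover.
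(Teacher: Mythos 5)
Your overall strategy --- pulling back a fixed nice couple along the orbit of $c_0$ via first-return iterates, doubling the degree at returns to the critical branch, and fitting a round annulus inside the nice couple's annulus --- shares the spirit of the paper's construction, but the argument for the covering property (conditions~2 and~3) has a genuine gap that the proposed fix does not close.

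A minor point first: recurrence of $c_0$ gives $R^k(c_0) \in B^*$ for \emph{infinitely many} $k$, not for all $k$ sufficiently large; the conclusion $D_k \to \infty$ still follows since the $D_k$ are non-decreasing.

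The substantial gap is in the step you flag as the ``main obstacle.'' The deduction ``no critical value of $f^m|_{\hW_k}$ in the round annulus $B(c, r) \setminus \overline{B(c, \kappa r)}$ implies no critical point of $f^m$ in $\hU \setminus \overline{U}$'' is logically invalid. The set $\hU$ could contain components of $f^{-m}(B(c, \kappa r))$ other than $U$; a critical point of $f^m$ lying in such a component sits in $\hU \setminus \overline{U}$ yet has its critical value inside $B(c, \kappa r)$, hence outside the round annulus. Ruling out these extra components is exactly what condition~2 (equal degree on $\hU$ and on $U$) demands, and it is not implied by avoiding critical values in the thin annulus. Moreover, even the avoidance step itself is not secured: the critical values of $f^m|_{\hW_k}$ could be radially spread through $\hV^c \setminus V^c$ so densely (in $\log$-radius) that no round annulus of the fixed modulus $-\ln \kappa$ sits in a gap. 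There is no a priori bound on their number in terms of $\mmod(\hV; V)$, because enlarging the modulus of the nice couple also lengthens the return times $m_k$ and so increases the number of critical points encountered.

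The paper avoids both problems by a different mechanism: it builds a nested sequence of nice couples $(\hV_n, V_n)$ \emph{relative to} the set $\sC_0'$ of exposed critical points whose orbits accumulate on $c_0$, pulled back so that each $\hV_{n+1}^c$ is \emph{centered at a critical point}, and it chooses the base couple inside $B(\Crit'(f), r_0)$ with $r_0$ small enough that any critical point re-entering $B(\sC_0, r_0)$ under forward iteration must already belong to $\sC_0$. This forces $f^{m_n(c) - 1}$ to be univalent (or at worst unicritical, with critical value in $\sC_0'$) on $f(\hV_{n+1}^c)$, which yields structurally --- not by a choice of $r$ --- that $f^{m_n(c)}$ has no critical points in $\hV_{n+1}^c \setminus V_{n+1}^c$ and hence the degree equality. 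Your first-return argument lacks an analogue of this structural statement, and the radius adjustment cannot substitute for it.
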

To prove this lemma we will make the following definition.
Let~$f$ be a rational map of degree at least two and let~$\sC$ be a subset of~$\Crit'(f)$.
We will say that an open neighborhood~$V$ of~$\sC$ which is disjoint from the forward orbits of the critical points which are not in~$J(f)$ is a \emph{nice set for~$f$ relative to~$\sC$}, if for each integer~$n \ge 1$ we have~$f^n(\partial V) \cap V = \emptyset$, and if each connected component of~$V$ is simply-connected and contains precisely one element of~$\sC$.
Notice that a nice set for~$f$ is a nice set for~$f$ relative to~$\Crit'(f)$.
Given nice sets~$\hV$ and~$V$ for~$f$ relative to~$\sC$ satisfying~$\overline{V} \subset \hV$, we will say that~$(\hV, V)$ is a \emph{nice couple for~$f$ relative to~$\sC$}, if for each integer~$n \ge 1$ we have~$f^n(\partial V) \cap \hV = \emptyset$.
\begin{proof}[Proof of Lemma~\ref{l:thin unramified annulus}]
If~$f$ has critical connections, then by replacing~$c_0$ by a critical point in its forward orbit if necessary, we assume that~$c_0$ is exposed.
Let~$\sC_0$ be the set of all those critical points of $f$ in~$J(f)$ whose forward orbit accumulates on~$c_0$.
Note that for each~$c \in \Crit(f) \setminus \sC_0$ the $\omega$-limit set of $c$ is disjoint from~$\sC_0$.
Thus there is $r_0 > 0$ such that if for some $c \in \Crit(f)$ and some integer $n \ge 1$ we have $f^n(c) \in B(\sC_0, r_0)$, then~$c \in \sC_0$.

Fix a periodic orbit~$\cO$ of period at least two of~$f$, disjoint from~$\Crit'(f)$.
Let~$\mathsf{m}_0 > 0$ be sufficiently large so that the following property holds.
If~$W \subset \CC$ is a topological disk disjoint from~$\cO$ and if~$K \subset W$ is a compact set such that~$W \setminus K$ is an annulus of modulus at least~$\mathsf{m}_0$, then~$\diam(K) < r_*$ and for each~$x \in K$ there is~$r \in (0, r_*)$ such that~$K \subset B(x, \kappa r)$ and $B(x, r) \subset W$, see for example~\cite[Proposition~$2.1$]{McM94}.

Let~$n \ge 1$ be sufficiently large integer such that~$2^n \ge N$ and fix a sufficiently small nice couple~$(\hV, V)$ such that~$\hV$ is disjoint from~$\cO$, contained in~$B(\Crit'(f), r_0)$ and such that~$\mmod(\hV; V) \ge 3\widehat{\ell_{\max}}(f)^n \mathsf{m}_0$.
Note that each pull-back~$W$ of~$\hV$ is disjoint form~$\cO$.
Furthermore, if for some integer~$m \ge 1$ and~$c \in \Crit(f)$ we have $f^m(c) \in B(\sC_0, r_0)$, then~$c \in \sC_0$.

\partn{1}
Let~$\sC_0'$ be the set of those exposed critical points in~$\sC_0$.
We have $c_0 \in \sC_0'$.
We will construct by induction a sequence of nice couples~$( (\hV_n, V_n) )_{n \ge 0}$ for~$f$ relative to~$\sC_0'$, as follows.
For each $c \in \sC_0'$ denote by~$\hV_0^c$ (resp.~$V_0^c$) the connected component of~$\hV$ (resp.~$V$) containing~$c$ and put
$$ \hV_0 \= \bigcup_{c \in \sC_0'} \hV_0^c
\text{ and }
V_0 \= \bigcup_{c \in \sC_0'} V_0^c. $$
Clearly~$(\hV_0, V_0)$ is a nice couple for~$f$ relative to~$\sC_0'$.

Let~$n \ge 0$ be a given integer and suppose by induction that $(\hV_n, V_n)$ is already defined.
As the forward orbit of each $c \in \sC_0'$ accumulates on~$c_0$ there is an integer $m \ge 1$ such that $f^m(c) \in V_n$.
Let $m_n(c)$ be the least such integer and let~$\hV_{n + 1}^c$ (resp. $V_{n + 1}^c$) be the pull-back of~$\hV_n$ (resp.~$V_n$) by~$f^{m_n(c)}$ containing~$c$.
Clearly $\overline{V_{n + 1}^c} \subset \hV_{n + 1}^c$ and since $(\hV_n, V_n)$ is a nice couple for~$f$ relative to~$\sC_0'$, it follows that~$\hV_{n + 1}^c \subset V_n$ and that
$$ \hV_{n + 1} \= \bigcup_{c \in \sC_0'} \hV_{n + 1}^c
\text{ and }
V_{n + 1} \= \bigcup_{c \in \sC_0'} V_{n + 1}^c, $$
form a nice couple for~$f$ relative to~$\sC_0'$.

\partn{2}
Let~$n \ge 1$ be given.
Using the fact that $(\hV_n, V_n)$ is a nice couple for~$f$ relative to~$\sC_0'$, it follows that if~$f$ does not have critical connections, then~$f^{m_n(c) - 1}$ is univalent on~$f(\hV_n^c)$.
If~$f$ does have critical connections, then~$f^{m_n(c) - 1}$ might not be univalent on~$f(\hV_n^c)$, but in this case~$f^{m_n(c) - 1}$ is unicritical on this set and its unique critical value is contained in~$\sC_0'$.
In all cases it follows that~$f^{m_n(c)}$ does not have critical points in~$\hV_{n + 1}^c \setminus V_{n + 1}^c$ and the degree of~$f^{m_n(c)}$ on~$\hV_n$ and the degree of~$f^{m_n(c)}$ on~$V_n$ are the same.

\partn{3}
Put~$m_0 = 0$ and for each~$j \in \{1, \ldots, n \}$ let~$c_j \in \sC_0'$ and~$m_j \ge 1$ be such that~$f^{m_j}(\hV_n^{c_0}) = \hV_{n - j}^{c_j}$.
Furthermore, put $d_0 = 1$ and for each~$j \in \{0, \ldots, n - 1 \}$ denote by~$d_j$ the degree of~$f^{m_n - m_j} : \hV_{n - j}^{c_j} \to \hV_0^{c_n}$.
Then the degree of~$f^{m_n - m_j} : V_{n - j}^{c_j} \to V_0^{c_n}$ is equal to~$d_j$ and for each~$j \in \{0, \ldots, n - 1 \}$ we have
$$ 2d_j \le d_{j + 1} \le \widehat{\ell_{\max}}(f) d_j. $$
Thus~$2^n \le d_n \le \widehat{\ell_{\max}}(f)^n$ and hence there is~$j \in \{ 0, \ldots, n \}$ such that
$$ N \le d_n/d_j \le \widehat{\ell_{\max}}(f) N. $$
Since~$f^{m_n - m_j} : \hV_{n - j}^{c_j} \to \hV_0^{c_n}$ is of degree~$d_j \le \widehat{\ell_{\max}}(f)^j$ and does not have critical points in~$\hV_{n - j}^{c_j} \setminus \overline{V_{n - j}^{c_j}}$, it follows that
$$ \mmod(\hV_{n - j}^{c_j}; V_{n-j}^{c_j})
=
d_j^{-1} \mmod(\hV_0^{c_n}; V_0^{c_n})
\ge
3\mathsf{m}_0 \widehat{\ell_{\max}}(f)^{n - j}. $$
Let~$\tV$ be a topological disk compactly contained in~$\hV_{n - j}^{c_j}$, such that
$$ \overline{V_{n -  j}^{c_j}} \subset \tV,
\mmod(\hV_{n - j}^{c_j}; \tV) \ge \mathsf{m}_0 \widehat{\ell_{\max}}(f)^{n - j}
\text{ and }
\mmod(\tV; V_{n - j}^{c_j}) \ge \mathsf{m}_0. $$
By our choice of~$\mathsf{m}_0$ there is~$r \in (0, r_*)$ such that~$V_{n - j}^{c_j} \subset B(c_j, \kappa r)$ and $B(c_j, r) \subset \tV \subset \hV_{n - j}^{c_j}$.
Let~$\tB$ (resp. $\hU$, $U$) be the pull-back of~$\tV$ (resp. $B(c_j, r)$, $B(c_j, \kappa r)$) by~$f^{m_j}$ containing~$c_0$.
Since the degree of~$f^{m_j} : \hV_n^{c_0} \to \hV_{n - j}^{c_j}$ is less than or equal to~$\widehat{\ell_{\max}}(f)^{n - j}$, it follows that~$\mmod(\hV_n^{c_0}; \tB) \ge \mathsf{m}_0$.
By our choice of~$\mathsf{m}_0$ this implies~$\diam(\hU) \le \diam(\tB) < r_*$.
Since~$V_n^{c_0} \subset U \subset \hU \subset \hV_n^{c_0}$ and since the degree of~$f^{m_j} : \hV_n^{c_0} \to \hV_{n - j}^{c_j}$ is equal to~$d_n/d_j$ and this map does not have critical points in~$\hV_n^{c_0} \setminus \overline{V_n^{c_0}}$, the conclusion of the lemma holds for this choice of~$r$ and for~$c = c_j$ and~$m = m_j$.
\end{proof}

\begin{proof}[Proof of~Theorem~\ref{t:semi-hyperbolicity}]
That the maximal entropy measure of a semi-hyperbolic rational map is doubling on the Julia set was shown by Ha{\"{\i}}ssinsky and Pilgrim in~\cite[Proposition~$4.2.9$]{HaiPil0612}.

To prove the converse statement, let~$f$ be a complex rational map of degree at least two, whose maximal entropy measure~$\rho_f$ is doubling on~$J(f)$, with constants $r_* > 0$ and $C_* > 0$.
By~\cite[Theorem~$1.1$]{Yin99}, to prove that~$f$ is semi-hyperbolic we just need to show that~$f$ has neither parabolic periodic points nor recurrent critical points in the Julia set.
In view of Lemma~\ref{l:doubling is regular} and Theorem~\ref{t:TCE}, $f$ satisfies the TCE condition.
In particular, $f$~does not have parabolic periodic points.
So we just need to show that~$f$ does not have recurrent critical points in the Julia set.

Suppose by contradiction that~$f$ has a recurrent critical point~$c_0$ in the Julia set.
Since the Julia set~$J(f)$ is uniformly perfect~\cite{Hin93,MandRo92}, it follows that the measure~$\rho_f$ satisfies the hypothesis of Lemma~\ref{l:inverse doubling}.
Let~$\eta_1 > 1$ and~$r_1 > 0$ be given by part~2 of Lemma~\ref{l:inverse doubling}.
Let~$M > 0$ and~$\delta > 0$ be the constants given by Lemma~\ref{l:middle is round}.

Since~$\CC$ is endowed with the spherical metric, for small $r' > r > 0$ and $x \in \CC$ the modulus of the annulus $B(x, r') \setminus \overline{B(x, r)}$ is equal to $\ln (r'/r)$ plus an error term that goes to zero as~$r' \to 0$.
So, reducing~$r_* > 0$ if necessary, we assume that~$r_* < \delta$ and that for each~$r \in (0, r_*]$, $\varepsilon \in (0, 1)$ and~$x \in \CC$ we have
\begin{equation}\label{e:modulus defect}
\left| \mmod \left( B(x, r) \setminus \overline{B(x, \varepsilon r)} \right) + \ln \varepsilon \right|
\le
\frac{1}{10}.
\end{equation}

Let~$k, \ell, N \ge 0$ be integers such that~$2^{k} \ge M$, $2^\ell > C_*^{k}$ and
$$ 2^{N/2} \le \eta_1^{\ell} < 2^{(N + 1) / 2}. $$
Taking~$\eta_1$ larger if necessary we assume~$N \ge 2$.

Since~$f$ satisfies the TCE condition, by~\cite[Proposition~$4.2$]{PrzRiv07}~$f$ has arbitrarily small nice couples of arbitrarily large modulus.
So~$f$ satisfies the hypothesis of Lemma~\ref{l:thin unramified annulus}.
Let~$\hU, U, A, m, r, c$ be given by Lemma~\ref{l:thin unramified annulus} for~$N$ and~$c_0$ as above and with
$$ r_* = \delta
\text{ and }
\kappa = \eta_1^{- (1 + 10 \widehat{\ell_{\max}(f)})\ell}. $$
Put $r_0 \= \kappa r$.
By definition~$A = \hU \setminus \overline{U}$ and~$\hU$ (resp.~$U$) is the connected component of~$f^{-m}(B(c, r))$ (resp. $f^{-m}(B(c, r_0))$) containing~$c_0$.

\partn{1}
Since the degree of~$f^m$ on~$U$ and the degree of~$f^m$ on~$\hU$ are the same, there is a unique connected component of
$$ f^{-m}\left( B \left( c, \eta_1^{(1 + 5\widehat{\ell_{\max}(f)})\ell} r_0 \right) \right)
\mbox{ and of }
f^{-m} \left( B \left( c, \eta_1^{5\widehat{\ell_{\max}(f)}\ell} r_0 \right) \right)$$
contained in~$\hU$.
We will denote it by~$\hB$ and~$B$, respectively.
It follows that the degree of~$f^m$ on each of the sets~$U, \hU, B, \hB$ is the same.
Thus, by Fact~\ref{f:Jacobian} we have
$$ \frac{\rho_f(\hB)}{\rho_f(B)}
=
\frac{\rho_f(f^m(\hB))}{\rho_f(f^m(B))}. $$
Using part~2 of Lemma~\ref{l:inverse doubling} inductively we obtain,
\begin{multline}\label{e:concentration}
\rho_f (f^m(\hB))
=
\rho_f\left(B \left(c, \eta_1^{(1 + 5 \widehat{\ell_{\max}(f)}) \ell} r_0 \right)\right)
\\ \ge
2^\ell \rho_f \left( B \left( c, \eta_1^{5\widehat{\ell_{\max}(f)}\ell} r_0 \right)  \right)
=
2^\ell \rho_f(f^m(B))
>
C_*^k \rho_f (f^m(B)).
\end{multline}

\partn{2}
Let~$\varphi_0$ be a Moebius transformation such that~$\varphi_0(0) = c$ and
$$ \varphi_0(\{ z \in \C : |z| < 1 \})
=
B \left( c, \eta_1^{5\widehat{\ell_{\max}(f)}\ell} r_0 \right). $$
Using the inequalities $\eta_1^\ell \exp(1/10) \le 2^{(N + 1)/2} \exp(1/10) < 2^{N}$ we obtain using~\eqref{e:modulus defect}
\begin{multline*}
B \left( c, \eta_1^{(1 + 5\widehat{\ell_{\max}(f)})\ell} r_0 \right)
\subset
\varphi_0 \left( \left\{ z \in \C : |z| < \eta_1^\ell \exp(1/10) \right\} \right)
\\ \subset
\varphi_0(\{ z \in \C : |z| < 2^{N} \}).
\end{multline*}
On the other hand, using the inequalities $\eta_1^{\ell} \ge 2^{N/2}$ and $\eta_1^{\ell \widehat{\ell_{\max}}(f)} \ge \exp(1 /10)$ we obtain using~\eqref{e:modulus defect}
\begin{multline*}
B(c, r_0)
\subset
\varphi_0 \left( \left\{ z \in \C : |z| < \eta_1^{- 5\widehat{\ell_{\max}(f)}\ell} \exp(1/10) \right\} \right)
\\ \subset
\varphi_0 \left( \left\{ z \in \C : |z| < 2^{- N \widehat{\ell_{\max}}(f)} \right\} \right)
\end{multline*}
and
\begin{multline*}
B(c, r)
\supset
\varphi_0 \left( \left\{ z \in \C : |z| < \eta_1^{(1 + 5\widehat{\ell_{\max}(f)})\ell} \exp(-1/10) \right\} \right)
\\ \supset
\varphi_0 \left( \left\{ z \in \C : |z| < 4^{N \widehat{\ell_{\max}}(f)} \right\} \right).
\end{multline*}
We have shown that~$f^m(\hB \setminus B) \subset \varphi_0(A(1, 2^N))$ and that
$$ \varphi_0 \left( A \left(2^{- N \widehat{\ell_{\max}}(f)}, 4^{N \widehat{\ell_{\max}}(f)} \right) \right)
\subset
B(c, r) \setminus \overline{B(c, r_0)}
=
f^m(A). $$
Since the degree of~$f^m : A \to B(c, r) \setminus \overline{B(c, r_0)}$ is at least~$N$ and at most~$\widehat{\ell_{\max}}(f) N$, we conclude that there is a univalent map~$\varphi : A(1, 8) \to A$ such that $\hB \setminus \overline{B} \subset \varphi(A(2, 4))$.
So there is~$r' > 0$ such that
$$ B(c_0, r')
\subset
B
\subset
\hB
\subset
B(c_0, Mr')
\subset
B(c_0, 2^k r'). $$
Using~\eqref{e:concentration} we obtain
$$ \rho_f(B(c_0, r'))
\le
\rho_f(B)
<
C_*^{-k} \rho_f(\hB)
\le
C_*^{-k} \rho_f(B(c_0, 2^kr')). $$
This contradicts the doubling property of~$\rho_f$ on~$J(f)$ and completes the proof of the theorem.
\end{proof}

\section{John domains as Fatou components}
\label{s:John domains}
This section is devoted to the proof of Theorem~\ref{t:John domains}.
We also prove Fact~\ref{f:cubic Newton} at the end of this section.

We start by recalling the definition of John domain.
An open and connected subset~$D$ of the Riemann sphere~$\CC$ is a \emph{John domain}, if there is~$z_0 \in D$ and a constant~$C > 0$ such that the following property holds: for each~$z \in \partial D$ there is a path~$\gamma$ in~$\CC$ joining~$z_0$ and~$z$, such that~$\gamma \setminus \{ z \} \subset D$ and such that for each~$w \in \gamma$ we have
$$ \dist(w, \partial D) \ge C \dist(w, z). $$

We will say that a subset~$J$ of~$\CC$ is \emph{porous at a point~$x$} in~$J$, if there is~$\xi \in (0, 1)$ such that for each small~$r > 0$ there is~$y \in B(x, r)$ such that~$B(y, \xi r) \cap J = \emptyset$.
Note that if~$D$ is a John domain, then the set~$\partial D$ is porous at each of its points.
\begin{lemm}
  \label{l:non-porosity}
Let~$f$ be a rational map of degree at least two having arbitrarily small nice couples of arbitrarily large modulus.
Then for each recurrent critical point~$c_0$ in~$J(f)$, the Julia set~$J(f)$ is not porous at~$c_0$.
\end{lemm}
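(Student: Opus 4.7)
The plan is to prove the lemma by contradiction: I assume $J(f)$ is porous at $c_0$ with some constant $\xi \in (0,1)$, and aim to derive a contradiction by using Lemma~\ref{l:thin unramified annulus} to exhibit, at some arbitrarily small scale $R$, a round ball $B(c_0, R)$ in which $J(f)$ is $\xi R/2$-dense.

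First I will apply Lemma~\ref{l:thin unramified annulus} with parameters $N \ge 2$ large and $\kappa \in (0,1)$ small, both to be chosen below in terms of $\xi$, and with $r_*$ arbitrarily small. This yields $c \in \Crit'(f) \subset J(f)$, $r \in (0, r_*)$, $m \ge 1$, nested pull-backs $U \subset \hU$ with $c_0 \in U$, and $f^m : A \to B(c,r) \setminus \overline{B(c, \kappa r)}$ an unramified covering of degree $D \in [N, \widehat{\ell_{\max}}(f) N]$, where $A = \hU \setminus \overline{U}$. Since $f^m|_A$ is unramified, it admits $D$ univalent inverse branches on each simply connected subset of the target annulus. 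Applying Lemma~\ref{l:middle is round} to univalent lifts of angular slices of the target (as in the last part of the proof of Theorem~\ref{t:semi-hyperbolicity}), I expect to obtain a scale $R > 0$ such that $B(c_0, R) \subset \hU$ and $\diam(U) \le M \kappa^{1/D} R$, with $M$ a constant depending only on $\widehat{\ell_{\max}}(f)$.

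Next I will establish density of $J(f)$ in $A$. Since $c \in J(f)$ and $J(f)$ is uniformly perfect, I can select Julia points $p_1, \ldots, p_K$ in the target annulus at geometrically spaced radial positions, with $K$ of order $\log(1/\kappa)$. Each $p_k$ has exactly $D$ preimages by $f^m$ in $A$, one per fundamental domain of the covering, and Koebe-type distortion on the univalent inverse branches will show that the resulting $DK$ Julia preimages populate $A$ on a grid of spacing at most $C R/D$ in the spherical metric, where $C$ depends only on $\widehat{\ell_{\max}}(f)$. I then choose $N$ large enough that $C/N < \xi/2$, and $\kappa$ small enough that $M \kappa^{1/(\widehat{\ell_{\max}}(f) N)} < \xi/2$; with these choices, $J(f)$ is $\xi R/2$-dense in $A$ and $\diam(U) < \xi R/2$.

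The contradiction then follows: by porosity, there is $y \in B(c_0, R)$ with $B(y, \xi R) \cap J(f) = \emptyset$; the $\xi R/2$-density of $J(f)$ in $A$ forces $B(y, \xi R/2) \cap A = \emptyset$, so that $B(y, \xi R/2) \subset B(c_0, R) \setminus A \subset U$ (using $B(c_0, R) \subset \hU$), contradicting $\diam(U) < \xi R/2$. The main obstacle I anticipate is the quantitative density statement in the preceding paragraph: while the degree-$D$ covering structure of $f^m|_A$ makes it clear at the modulus level that the $D$ preimages of each target point are equidistributed among the fundamental domains, translating this into uniform Euclidean density requires careful control of the shape and distortion of fundamental domains via Lemma~\ref{l:middle is round}, which becomes delicate for large $D$ since the fundamental domains are thin. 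The model case $f^m(z) = z^D$ near $c_0 = 0$, where preimages of target points arrange themselves on a polar grid of uniform spacing $\sim R/D$, should guide the estimates in the general case.
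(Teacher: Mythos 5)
Your overall strategy is the same as the paper's: use Lemma~\ref{l:thin unramified annulus} to produce a degree-$D$ unramified covering $f^m\colon A\to B(c,r)\setminus\overline{B(c,\kappa r)}$ with $D\in[N,\widehat{\ell_{\max}}(f)N]$, transport the density of $J(f)$ near $c\in J(f)$ (from uniform perfectness) through this covering, exploit $D\gg 1$ to shrink the density scale by a factor $\sim 1/D$, and conclude that $J(f)$ is dense in a thick annulus around $c_0$ at a scale that contradicts porosity. That is exactly the mechanism in the paper. However, the step you yourself flag as the main obstacle --- upgrading the covering structure to a \emph{uniform metric} density estimate in $A$ --- is a genuine gap, and the tool you propose to fill it does not do the job. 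Lemma~\ref{l:middle is round} controls the distortion of a univalent map defined on the round annulus $A(1,8)$; it says nothing useful about the thin angular fundamental domains of a degree-$D$ covering when $D$ is large, which is precisely the regime you need. In the proof of Theorem~\ref{t:semi-hyperbolicity} that lemma is applied to a different object (the image under the covering of a thin concentric annulus, which is round after one branch of a radical), not to individual sheets of the covering, so the analogy you invoke does not carry over.

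The paper resolves this by working entirely in logarithmic coordinates: it introduces the $1$-periodic universal covers $E_x\colon\C\to\CC\setminus\{x,\widehat x\}$, the strips $S(s,s')$, the translations $T_\tau$, and the homothecies $M_\lambda$. In these coordinates the covering $f^m\colon A\to B(c,r)\setminus\overline{B(c,\kappa r)}$ lifts to a biholomorphism $\tvarphi$ between strips satisfying $\tvarphi\circ T_d=T_1\circ\tvarphi$, and precomposing with $M_d$ gives a map $\hvarphi$ commuting with $T_1$; the factor-$1/d$ rescaling \emph{is} the division into $d$ sheets, but now the distortion estimate is a Koebe bound for a univalent map defined on a strip of definite width $S(-2h,3h)$ restricted to $S(-h,2h)$, which is uniform in $d$. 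Uniform perfectness gives that $E_c^{-1}(J(f))$ is $(h_1+1)$-dense in a left half-plane, the rescaling by $M_d^{-1}$ makes this $(h_1+1)/d$-dense, and bounded distortion of $\hvarphi$ transports this into $\varepsilon_0$-density of $E_{c_0}^{-1}(J(f))$ in a strip $S(s''-h_0,s'')$ with $s''$ as negative as desired --- which is the non-porosity statement. Your closing remark about the model map $z\mapsto z^D$ shows you see the right picture, but without the strip/translation formalism (or an equivalent substitute) the Euclidean-ball version of the density estimate, and hence the contradiction you derive from it, is not justified. The quantitative conclusion you assert, that the $DK$ preimages ``populate $A$ on a grid of spacing at most $CR/D$,'' is exactly what needs proof and is not a consequence of the tools you cite.

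As a minor additional point, your final contradiction needs a small repair: from $B(y,\xi R)\cap J(f)=\emptyset$ and $\xi R/2$-density of $J(f)$ in $A$ you get $B(y,\xi R/2)\cap A=\emptyset$, but to place $B(y,\xi R/2)$ inside $B(c_0,R)$ (so that it sits in $\hU\setminus A=\overline U$) you must take the porosity ball $B(y,\xi r)$ at a radius $r$ somewhat smaller than $R$, or otherwise account for the case where $y$ is near $\partial B(c_0,R)$. This is easily fixed, but it should be stated.
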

In the proof of this lemma we use Lemma~\ref{l:thin unramified annulus}.
This last result implies that for each~$N \ge 1$ there are integers $M \in \{ N, \ldots, N \widehat{\ell_{\max}}(f) \}$ and~$m \ge 1$ such that~$f^m$ is in local coordinates close to the map~$z \mapsto z^M$ on a small think annulus around~$c_0$.
The fact that~$J(f)$ is uniformly perfect allows us estimate the ``non-porosity'' of~$J(f)$ at~$c_0$ on this thick annulus.
Taking~$N$ arbitrarily large will allow us to conclude that~$J(f)$ is not porous at~$c_0$.

We will introduce some notation to prove Lemma~\ref{l:non-porosity}.
For~$\tau \in \C$ we denote by~$T_\tau : \C \to \C$ the translation $T_\tau(z) = z + \tau$ and for~$\lambda \in \C \setminus \{ 0 \}$ the homothecy $M_\lambda(z) = \lambda z$.
For~$s, s' \in \R$ with~$s < s'$ we put
$$ S(s, s') \= \{ z \in \C : s < \Im(z) < s' \}. $$

Recall that~$\CC$ is endowed with the spherical metric.
We identify~$\CC$ with~$\C \cup \{ \infty \}$.
For a point~$x \in \CC$ we denote by~$\widehat{x} \in \CC$ the antipodal point of~$x$ and we let~$\psi_{x}$ be an isometry of~$\CC$ mapping~$0$ to~$x$.
Furthermore we put
\begin{center}
\begin{tabular}{rcl}
$E_x : \C$ & $\to$ & $\CC \setminus \{ x, \widehat{x} \} $ \\
$z$ & $\mapsto$ & $\psi_x (\exp(- 2\pi i z))$.
\end{tabular}
\end{center}
It is a $1$-periodic covering map.
\begin{proof}[Proof of Lemma~\ref{l:non-porosity}]
We will show that for each~$s_0 \in \R, h_0 > 0$ and $\varepsilon_0 > 0$, there is $s < s_0$ such that~$E_{c_0}^{-1}(J(f))$ is $\varepsilon_0$-dense in the strip~$S(s - h_0, s)$.
This clearly implies that~$J(f)$ is not porous at~$c_0$.

Let~$h_0' > 0$ be a sufficiently large constant such that for every univalent map~$\tvarphi : S(0, h_0') \to \C$ that commutes with the translation~$T_1$, there is $s \in \R$ such that the image of~$\tvarphi$ contains the strip~$S(s - h_0, s)$, see~\cite[Proposition~$2.1$]{McM94}.
Taking~$h_0' > 0$ larger if necessary we assume~$h_0' \ge \varepsilon_0$.
It follows from Koebe distortion theorem that there is a constant~$D > 1$ such that for each~$h \ge \varepsilon_0$ and each univalent map $\tvarphi : S(-2h, 3h) \to  \C$ that commutes with~$T_1$ the distortion of~$\tvarphi$ on $S(-h, 2h)$ is bounded by~$D$.

Since the Julia set~$J(f)$ is uniformly perfect~\cite{Hin93,MandRo92}, it follows that there are constants~$s_1 \in \R$ and $h_1 > 0$ such that for each~$x \in J(f)$ and~$s \le s_1$, the set $E_x^{-1}(J(f))$ intersects the strip~$S(s, s + h_1)$.
Since the set~$E_x^{-1}(J(f))$ is $1$-periodic, it follows that the set~$E_x^{-1}(J(f))$ is $(h_1 + 1)$\nobreakdash-dense in $\{ z \in \C : \Im(z) \le s_1 \}$.
Decreasing~$s_0$ if necessary we assume~$s_0 \le s_1$.

Let~$N \ge \varepsilon_0^{-1} D (h_1 + 1)$ be an integer, put~$h_2 \= 5 N \widehat{\ell_{\max}}(f) h_0'$ and let~$r_* > 0$ and~$\kappa \in (0, 1)$ be sufficiently small so that for each~$r \in (0, r_*)$ and~$x \in J(f)$ the set~$E_x^{-1}(B(x, r))$ is contained in $\{ z \in \C : \Im(z) \le s_0 \}$ and there is~$s \le s_0$ such that the set~$E_x^{-1}(B(x, r) \setminus B(x, \kappa r))$ contains the strip~$S(s - h_2, s)$.

Let $m, c, r$ and~$A$ be given by Lemma~\ref{l:thin unramified annulus} for the choices of~$N$, $r_*$ and~$\kappa$ as above, and let~$d$ be the degree of~$f^m : A \to B(c, r) \setminus \overline{B(c, \kappa r)}$.
We have~$N \le d \le \widehat{\ell_{\max}}(f) N$.
Let~$s \le s_0$ be such that~$E_c^{-1}(B(c, r)  \setminus \overline{B(c, \kappa r)})$ contains the strip~$S(s - h_2, s)$.
If we denote by~$A'$ the pull-back of the annulus~$A_0 \= E_c(S(s - h_2, s))$ by~$f^m$ contained in~$A$ and put~$\widetilde{A'} \= E_{c_0}^{-1}(A')$, then
$$ f^m \circ E_{c_0} : \widetilde{A'} \to A_0
\text{ and }
E_c : S(s - h_2, s) \to A_0, $$
are both universal covering maps.
Thus there is a biholomorphic map
$$ \tvarphi : S \left( s - h_2, s \right) \to \widetilde{A'}, $$
satisfying~$f^m \circ E_{c_0} \circ \tvarphi = E_c$ and therefore~$\tvarphi \circ T_d = T_1 \circ \tvarphi$.
In particular, the map $\hvarphi : S \left( \frac{s - h_2}{d}, \frac{s}{d} \right) \to \widetilde{A'}$ defined by~$\hvarphi \= \tvarphi \circ M_{d}$ commutes with~$T_1$.

Since~$d \le N \widehat{\ell_{\max}}(f)$, we have~$h_2/d \ge 5h_0'$.
So, if we put~$s' = s/d - 3h_0'$, then~$S(s' - 2h_0', s + 3h_0') \subset S \left( \tfrac{s - h_2}{d}, \tfrac{s}{d} \right)$ and therefore the distortion of~$\hvarphi$ on the strip~$S(s' - h_0', s' + 2h_0')$ is bounded by~$D$.
By our choice of~$h_0'$ there is~$s'' \in \R$ such that the set~$\hvarphi(S(s', s' + h_0'))$ contains the strip~$S(s'' - h_0, s'')$.
Since
$$ E_{c_0}(\hvarphi(S(s' - h_0', s' + 2h_0'))) \subset A \subset B(c_0, r_*), $$
by our choice of~$r_*$ we have~$s'' \le s_0$.
On the other hand, since the set~$E_c^{-1}(J(f))$ is~$(h_1 + 1)$-dense in~$S(s - h_2, s)$ and~$(h_1 + 1)/d \le \varepsilon_0 D^{-1} \le \varepsilon_0 \le h_0'$, it follows that the set
$$ M_{d}^{-1}(E_c^{-1}(J(f)) \cap S(s - h_2, s)), $$
is $(\varepsilon_0 D^{-1})$-dense in the strip~$S(s', s' + h_0')$.
Thus the set
$$\hvarphi \circ M_d^{-1} \left( E_c^{-1}(J(f)) \cap S(s - h_2, s)) \right)
\subset
E_{c_0}^{-1}(J(f)), $$
is $\varepsilon_0$-dense in~$\hvarphi(S(s', s' + h_0'))$ and hence in~$S(s'' - h_0, s'')$.
This completes the proof of the lemma.
\end{proof}

\begin{proof}[Proof of Theorem~\ref{t:John domains}]
If~$f$ semi-hyperbolic, then by~\cite[Theorem~$1$]{Mih0803} the attracting basin~$\cA$ is a John domain, see also~\cite[Theorem~$1.2$]{Yin99} for the case of connected Julia sets.
Suppose now that~$\cA$ is a John domain.
By~\cite[Theorem~$1.1$]{Yin99}, to show that~$f$ is semi-hyperbolic it is enough to show that~$f$ has neither parabolic periodic points nor recurrent critical points in its Julia set.
Since~$\cA$ is a John domain, the set~$J(f) = \partial \cA$ is porous at each of its points.
On the other hand, $\cA$ is a H{\"o}lder domain, see for example~\cite[\S$5.2$]{Pom92}.
So, combining~\cite[Theorem~$1$, (iii)]{GraSmi98} and~\cite[Main Theorem]{PrzRivSmi03}, we obtain that~$f$ satisfies the TCE condition.
Therefore~$f$ does not have parabolic periodic points and by~\cite[Proposition~$4.2$]{PrzRiv07} it satisfies the hypothesis of Lemma~\ref{l:non-porosity}.
Since~$J(f)$ is porous at each of its points, Lemma~\ref{l:non-porosity} implies that~$f$ does not have recurrent critical points in~$J(f)$.
This shows that~$f$ is semi-hyperbolic and completes the proof of the theorem.
\end{proof}

\begin{proof}[Proof of Fact~\ref{f:cubic Newton}]
We will use that the Julia set of~$N$ is connected, see for example~\cite[Proposition~$2.6$]{Lei97}.

The rational map~$N$ has four critical points, three of which are fixed.
Denote the corresponding immediate attracting basins by~$B_0$, $B_1$ and~$B_2$.
If there is another attracting cycle, then~$N$ is hyperbolic and the result is well known.
We thus assume that there is no other attracting cycle besides the three fixed critical points.
Since by hypothesis~$N$ has no parabolic periodic point or Siegel disks it follows that every point in the Fatou set is eventually mapped into~$B_0 \cup B_1 \cup B_2$ under forward iteration.
So we just need to show that for each~$i \in \{ 0, 1, 2 \}$ the Fatou components of~$N$ that are eventually mapped to~$B_i$ are quasi-disks with a uniform constant.

Let~$P$ the closure of the forward orbit of the fourth critical point.
Since~$N$ is twice renormalizable, for each~$i \in \{ 0, 1, 2 \}$ the boundary of~$B_i$ is disjoint from~$P$, see the proof of~\cite[Proposition~$8.3$, 4)]{Roe08}.
Since the Julia set of~$N$ is connected, it follows that~$B_i$ is a quasi-disk.
Fix a simply-connected neighborhood~$V_i$ of~$\overline{B_i}$ that is disjoint from~$P$.
Let~$U$ be a Fatou component of~$N$ different from~$B_i$ that is eventually mapped to~$B_i$ and denote by~$n \ge 1$ the least integer such that~$f^n(U) = B_i$.
Then~$f^n$ maps a neighborhood of~$U$ univalently onto~$V_i$ and Koebe distortion theorem implies that~$U$ is a quasi-disk whose constant depends on~$B_i$ and~$U_i$ only.
\end{proof}

\bibliographystyle{alpha}
\bibliography{$HOME/papers/0BIB/papers}

\def\polhk#1{\setbox0=\hbox{#1}{\ooalign{\hidewidth
  \lower1.5ex\hbox{`}\hidewidth\crcr\unhbox0}}} \def\cprime{$'$}
  \def\cprime{$'$} \def\cprime{$'$} \def\cprime{$'$} \def\cprime{$'$}
  \def\cprime{$'$} \def\cprime{$'$} \def\cprime{$'$} \def\cprime{$'$}
  \def\cprime{$'$} \def\cprime{$'$} \def\cprime{$'$} \def\cprime{$'$}
  \def\polhk#1{\setbox0=\hbox{#1}{\ooalign{\hidewidth
  \lower1.5ex\hbox{`}\hidewidth\crcr\unhbox0}}} \def\cprime{$'$}
  \def\cprime{$'$} \def\cprime{$'$}
  \def\soft#1{\leavevmode\setbox0=\hbox{h}\dimen7=\ht0\advance \dimen7
  by-1ex\relax\if t#1\relax\rlap{\raise.6\dimen7
  \hbox{\kern.3ex\char'47}}#1\relax\else\if T#1\relax
  \rlap{\raise.5\dimen7\hbox{\kern1.3ex\char'47}}#1\relax \else\if
  d#1\relax\rlap{\raise.5\dimen7\hbox{\kern.9ex \char'47}}#1\relax\else\if
  D#1\relax\rlap{\raise.5\dimen7 \hbox{\kern1.4ex\char'47}}#1\relax\else\if
  l#1\relax \rlap{\raise.5\dimen7\hbox{\kern.4ex\char'47}}#1\relax \else\if
  L#1\relax\rlap{\raise.5\dimen7\hbox{\kern.7ex
  \char'47}}#1\relax\else\message{accent \string\soft \space #1 not
  defined!}#1\relax\fi\fi\fi\fi\fi\fi} \def\cprime{$'$} \def\cprime{$'$}
  \def\cprime{$'$} \def\cprime{$'$} \def\cprime{$'$} \def\cprime{$'$}
  \def\cprime{$'$} \def\cprime{$'$} \def\cprime{$'$} \def\cprime{$'$}
  \def\cprime{$'$} \def\cprime{$'$} \def\cprime{$'$} \def\cprime{$'$}
  \def\cprime{$'$} \def\cprime{$'$} \def\cprime{$'$} \def\cprime{$'$}
  \def\cprime{$'$} \def\cprime{$'$} \def\cprime{$'$} \def\cprime{$'$}
  \def\cprime{$'$} \def\cprime{$'$} \def\cprime{$'$} \def\cprime{$'$}
  \def\cprime{$'$} \def\cprime{$'$} \def\cprime{$'$} \def\cprime{$'$}
  \def\cprime{$'$} \def\cprime{$'$} \def\cprime{$'$} \def\cprime{$'$}
  \def\cprime{$'$} \def\cprime{$'$} \def\cprime{$'$} \def\cprime{$'$}
  \def\cprime{$'$} \def\cprime{$'$} \def\cprime{$'$} \def\cprime{$'$}
  \def\cprime{$'$} \def\cprime{$'$} \def\cprime{$'$} \def\cprime{$'$}
  \def\cprime{$'$} \def\cdprime{$''$}
  \def\cfudot#1{\ifmmode\setbox7\hbox{$\accent"5E#1$}\else
  \setbox7\hbox{\accent"5E#1}\penalty 10000\relax\fi\raise 1\ht7
  \hbox{\raise.1ex\hbox to 1\wd7{\hss.\hss}}\penalty 10000 \hskip-1\wd7\penalty
  10000\box7} \def\cprime{$'$}
  \def\polhk#1{\setbox0=\hbox{#1}{\ooalign{\hidewidth
  \lower1.5ex\hbox{`}\hidewidth\crcr\unhbox0}}}
  \def\polhk#1{\setbox0=\hbox{#1}{\ooalign{\hidewidth
  \lower1.5ex\hbox{`}\hidewidth\crcr\unhbox0}}} \def\cprime{$'$}
  \def\cydot{\leavevmode\raise.4ex\hbox{.}}
  \def\polhk#1{\setbox0=\hbox{#1}{\ooalign{\hidewidth
  \lower1.5ex\hbox{`}\hidewidth\crcr\unhbox0}}}
  \def\polhk#1{\setbox0=\hbox{#1}{\ooalign{\hidewidth
  \lower1.5ex\hbox{`}\hidewidth\crcr\unhbox0}}}
  \def\polhk#1{\setbox0=\hbox{#1}{\ooalign{\hidewidth
  \lower1.5ex\hbox{`}\hidewidth\crcr\unhbox0}}}
  \def\polhk#1{\setbox0=\hbox{#1}{\ooalign{\hidewidth
  \lower1.5ex\hbox{`}\hidewidth\crcr\unhbox0}}}
  \def\polhk#1{\setbox0=\hbox{#1}{\ooalign{\hidewidth
  \lower1.5ex\hbox{`}\hidewidth\crcr\unhbox0}}}
  \def\polhk#1{\setbox0=\hbox{#1}{\ooalign{\hidewidth
  \lower1.5ex\hbox{`}\hidewidth\crcr\unhbox0}}}
  \def\polhk#1{\setbox0=\hbox{#1}{\ooalign{\hidewidth
  \lower1.5ex\hbox{`}\hidewidth\crcr\unhbox0}}}
  \def\polhk#1{\setbox0=\hbox{#1}{\ooalign{\hidewidth
  \lower1.5ex\hbox{`}\hidewidth\crcr\unhbox0}}} \def\cprime{$'$}
  \def\cprime{$'$} \def\cprime{$'$} \def\cprime{$'$} \def\cprime{$'$}
  \def\cprime{$'$} \def\cprime{$'$} \def\cprime{$'$} \def\cprime{$'$}
  \def\cprime{$'$} \def\cprime{$'$} \def\cprime{$'$} \def\cprime{$'$}
  \def\cprime{$'$} \def\cprime{$'$} \def\cprime{$'$} \def\cprime{$'$}
  \def\cprime{$'$} \def\cprime{$'$} \def\cprime{$'$} \def\cprime{$'$}
  \def\cprime{$'$} \def\cprime{$'$} \def\cprime{$'$} \def\cprime{$'$}
  \def\cprime{$'$} \def\cprime{$'$} \def\cprime{$'$}
  \def\polhk#1{\setbox0=\hbox{#1}{\ooalign{\hidewidth
  \lower1.5ex\hbox{`}\hidewidth\crcr\unhbox0}}} \def\cprime{$'$}
  \def\cprime{$'$} \def\cprime{$'$} \def\cprime{$'$} \def\cprime{$'$}
  \def\cprime{$'$} \def\polhk#1{\setbox0=\hbox{#1}{\ooalign{\hidewidth
  \lower1.5ex\hbox{`}\hidewidth\crcr\unhbox0}}}
  \def\polhk#1{\setbox0=\hbox{#1}{\ooalign{\hidewidth
  \lower1.5ex\hbox{`}\hidewidth\crcr\unhbox0}}} \def\cprime{$'$}
  \def\cprime{$'$} \def\cprime{$'$} \def\cprime{$'$}
  \def\soft#1{\leavevmode\setbox0=\hbox{h}\dimen7=\ht0\advance \dimen7
  by-1ex\relax\if t#1\relax\rlap{\raise.6\dimen7
  \hbox{\kern.3ex\char'47}}#1\relax\else\if T#1\relax
  \rlap{\raise.5\dimen7\hbox{\kern1.3ex\char'47}}#1\relax \else\if
  d#1\relax\rlap{\raise.5\dimen7\hbox{\kern.9ex \char'47}}#1\relax\else\if
  D#1\relax\rlap{\raise.5\dimen7 \hbox{\kern1.4ex\char'47}}#1\relax\else\if
  l#1\relax \rlap{\raise.5\dimen7\hbox{\kern.4ex\char'47}}#1\relax \else\if
  L#1\relax\rlap{\raise.5\dimen7\hbox{\kern.7ex
  \char'47}}#1\relax\else\message{accent \string\soft \space #1 not
  defined!}#1\relax\fi\fi\fi\fi\fi\fi}
  \def\soft#1{\leavevmode\setbox0=\hbox{h}\dimen7=\ht0\advance \dimen7
  by-1ex\relax\if t#1\relax\rlap{\raise.6\dimen7
  \hbox{\kern.3ex\char'47}}#1\relax\else\if T#1\relax
  \rlap{\raise.5\dimen7\hbox{\kern1.3ex\char'47}}#1\relax \else\if
  d#1\relax\rlap{\raise.5\dimen7\hbox{\kern.9ex \char'47}}#1\relax\else\if
  D#1\relax\rlap{\raise.5\dimen7 \hbox{\kern1.4ex\char'47}}#1\relax\else\if
  l#1\relax \rlap{\raise.5\dimen7\hbox{\kern.4ex\char'47}}#1\relax \else\if
  L#1\relax\rlap{\raise.5\dimen7\hbox{\kern.7ex
  \char'47}}#1\relax\else\message{accent \string\soft \space #1 not
  defined!}#1\relax\fi\fi\fi\fi\fi\fi} \def\cprime{$'$}
  \def\polhk#1{\setbox0=\hbox{#1}{\ooalign{\hidewidth
  \lower1.5ex\hbox{`}\hidewidth\crcr\unhbox0}}} \def\cprime{$'$}
  \def\cprime{$'$} \def\cprime{$'$} \def\cprime{$'$} \def\cprime{$'$}
  \def\cprime{$'$} \def\cprime{$'$} \def\cprime{$'$} \def\cprime{$'$}
  \def\cprime{$'$} \def\ocirc#1{\ifmmode\setbox0=\hbox{$#1$}\dimen0=\ht0
  \advance\dimen0 by1pt\rlap{\hbox to\wd0{\hss\raise\dimen0
  \hbox{\hskip.2em$\scriptscriptstyle\circ$}\hss}}#1\else {\accent"17 #1}\fi}
  \def\polhk#1{\setbox0=\hbox{#1}{\ooalign{\hidewidth
  \lower1.5ex\hbox{`}\hidewidth\crcr\unhbox0}}}
  \def\polhk#1{\setbox0=\hbox{#1}{\ooalign{\hidewidth
  \lower1.5ex\hbox{`}\hidewidth\crcr\unhbox0}}}
  \def\polhk#1{\setbox0=\hbox{#1}{\ooalign{\hidewidth
  \lower1.5ex\hbox{`}\hidewidth\crcr\unhbox0}}}
  \def\polhk#1{\setbox0=\hbox{#1}{\ooalign{\hidewidth
  \lower1.5ex\hbox{`}\hidewidth\crcr\unhbox0}}}
  \def\polhk#1{\setbox0=\hbox{#1}{\ooalign{\hidewidth
  \lower1.5ex\hbox{`}\hidewidth\crcr\unhbox0}}}
  \def\polhk#1{\setbox0=\hbox{#1}{\ooalign{\hidewidth
  \lower1.5ex\hbox{`}\hidewidth\crcr\unhbox0}}}
  \def\polhk#1{\setbox0=\hbox{#1}{\ooalign{\hidewidth
  \lower1.5ex\hbox{`}\hidewidth\crcr\unhbox0}}}
  \def\polhk#1{\setbox0=\hbox{#1}{\ooalign{\hidewidth
  \lower1.5ex\hbox{`}\hidewidth\crcr\unhbox0}}}
  \def\polhk#1{\setbox0=\hbox{#1}{\ooalign{\hidewidth
  \lower1.5ex\hbox{`}\hidewidth\crcr\unhbox0}}}
  \def\polhk#1{\setbox0=\hbox{#1}{\ooalign{\hidewidth
  \lower1.5ex\hbox{`}\hidewidth\crcr\unhbox0}}}
  \def\polhk#1{\setbox0=\hbox{#1}{\ooalign{\hidewidth
  \lower1.5ex\hbox{`}\hidewidth\crcr\unhbox0}}}
  \def\polhk#1{\setbox0=\hbox{#1}{\ooalign{\hidewidth
  \lower1.5ex\hbox{`}\hidewidth\crcr\unhbox0}}}
  \def\polhk#1{\setbox0=\hbox{#1}{\ooalign{\hidewidth
  \lower1.5ex\hbox{`}\hidewidth\crcr\unhbox0}}}
  \def\polhk#1{\setbox0=\hbox{#1}{\ooalign{\hidewidth
  \lower1.5ex\hbox{`}\hidewidth\crcr\unhbox0}}}
  \def\polhk#1{\setbox0=\hbox{#1}{\ooalign{\hidewidth
  \lower1.5ex\hbox{`}\hidewidth\crcr\unhbox0}}}
  \def\polhk#1{\setbox0=\hbox{#1}{\ooalign{\hidewidth
  \lower1.5ex\hbox{`}\hidewidth\crcr\unhbox0}}}
  \def\polhk#1{\setbox0=\hbox{#1}{\ooalign{\hidewidth
  \lower1.5ex\hbox{`}\hidewidth\crcr\unhbox0}}}
  \def\polhk#1{\setbox0=\hbox{#1}{\ooalign{\hidewidth
  \lower1.5ex\hbox{`}\hidewidth\crcr\unhbox0}}}
  \def\polhk#1{\setbox0=\hbox{#1}{\ooalign{\hidewidth
  \lower1.5ex\hbox{`}\hidewidth\crcr\unhbox0}}}
  \def\polhk#1{\setbox0=\hbox{#1}{\ooalign{\hidewidth
  \lower1.5ex\hbox{`}\hidewidth\crcr\unhbox0}}}
  \def\polhk#1{\setbox0=\hbox{#1}{\ooalign{\hidewidth
  \lower1.5ex\hbox{`}\hidewidth\crcr\unhbox0}}} \def\cdprime{$''$}
  \def\cdprime{$''$} \def\cprime{$'$}
  \def\polhk#1{\setbox0=\hbox{#1}{\ooalign{\hidewidth
  \lower1.5ex\hbox{`}\hidewidth\crcr\unhbox0}}}
  \def\polhk#1{\setbox0=\hbox{#1}{\ooalign{\hidewidth
  \lower1.5ex\hbox{`}\hidewidth\crcr\unhbox0}}} \def\cprime{$'$}
  \def\cprime{$'$} \def\cprime{$'$} \def\cprime{$'$} \def\cprime{$'$}
  \def\cprime{$'$} \def\cprime{$'$} \def\cprime{$'$} \def\cprime{$'$}
  \def\cprime{$'$} \def\cprime{$'$}
  \def\polhk#1{\setbox0=\hbox{#1}{\ooalign{\hidewidth
  \lower1.5ex\hbox{`}\hidewidth\crcr\unhbox0}}} \def\cprime{$'$}
  \def\polhk#1{\setbox0=\hbox{#1}{\ooalign{\hidewidth
  \lower1.5ex\hbox{`}\hidewidth\crcr\unhbox0}}}
  \def\polhk#1{\setbox0=\hbox{#1}{\ooalign{\hidewidth
  \lower1.5ex\hbox{`}\hidewidth\crcr\unhbox0}}}
  \def\polhk#1{\setbox0=\hbox{#1}{\ooalign{\hidewidth
  \lower1.5ex\hbox{`}\hidewidth\crcr\unhbox0}}} \def\cprime{$'$}
  \def\cprime{$'$} \def\cprime{$'$} \def\cprime{$'$} \def\cprime{$'$}
  \def\cprime{$'$} \def\cprime{$'$} \def\cprime{$'$} \def\cprime{$'$}
  \def\cprime{$'$} \def\soft#1{\leavevmode\setbox0=\hbox{h}\dimen7=\ht0\advance
  \dimen7 by-1ex\relax\if t#1\relax\rlap{\raise.6\dimen7
  \hbox{\kern.3ex\char'47}}#1\relax\else\if T#1\relax
  \rlap{\raise.5\dimen7\hbox{\kern1.3ex\char'47}}#1\relax \else\if
  d#1\relax\rlap{\raise.5\dimen7\hbox{\kern.9ex \char'47}}#1\relax\else\if
  D#1\relax\rlap{\raise.5\dimen7 \hbox{\kern1.4ex\char'47}}#1\relax\else\if
  l#1\relax \rlap{\raise.5\dimen7\hbox{\kern.4ex\char'47}}#1\relax \else\if
  L#1\relax\rlap{\raise.5\dimen7\hbox{\kern.7ex
  \char'47}}#1\relax\else\message{accent \string\soft \space #1 not
  defined!}#1\relax\fi\fi\fi\fi\fi\fi}
  \def\soft#1{\leavevmode\setbox0=\hbox{h}\dimen7=\ht0\advance \dimen7
  by-1ex\relax\if t#1\relax\rlap{\raise.6\dimen7
  \hbox{\kern.3ex\char'47}}#1\relax\else\if T#1\relax
  \rlap{\raise.5\dimen7\hbox{\kern1.3ex\char'47}}#1\relax \else\if
  d#1\relax\rlap{\raise.5\dimen7\hbox{\kern.9ex \char'47}}#1\relax\else\if
  D#1\relax\rlap{\raise.5\dimen7 \hbox{\kern1.4ex\char'47}}#1\relax\else\if
  l#1\relax \rlap{\raise.5\dimen7\hbox{\kern.4ex\char'47}}#1\relax \else\if
  L#1\relax\rlap{\raise.5\dimen7\hbox{\kern.7ex
  \char'47}}#1\relax\else\message{accent \string\soft \space #1 not
  defined!}#1\relax\fi\fi\fi\fi\fi\fi}
  \def\polhk#1{\setbox0=\hbox{#1}{\ooalign{\hidewidth
  \lower1.5ex\hbox{`}\hidewidth\crcr\unhbox0}}} \def\cprime{$'$}
  \def\polhk#1{\setbox0=\hbox{#1}{\ooalign{\hidewidth
  \lower1.5ex\hbox{`}\hidewidth\crcr\unhbox0}}}
  \def\polhk#1{\setbox0=\hbox{#1}{\ooalign{\hidewidth
  \lower1.5ex\hbox{`}\hidewidth\crcr\unhbox0}}}
  \def\polhk#1{\setbox0=\hbox{#1}{\ooalign{\hidewidth
  \lower1.5ex\hbox{`}\hidewidth\crcr\unhbox0}}}
  \def\polhk#1{\setbox0=\hbox{#1}{\ooalign{\hidewidth
  \lower1.5ex\hbox{`}\hidewidth\crcr\unhbox0}}}
  \def\polhk#1{\setbox0=\hbox{#1}{\ooalign{\hidewidth
  \lower1.5ex\hbox{`}\hidewidth\crcr\unhbox0}}}
  \def\polhk#1{\setbox0=\hbox{#1}{\ooalign{\hidewidth
  \lower1.5ex\hbox{`}\hidewidth\crcr\unhbox0}}}
  \def\polhk#1{\setbox0=\hbox{#1}{\ooalign{\hidewidth
  \lower1.5ex\hbox{`}\hidewidth\crcr\unhbox0}}}
  \def\polhk#1{\setbox0=\hbox{#1}{\ooalign{\hidewidth
  \lower1.5ex\hbox{`}\hidewidth\crcr\unhbox0}}}
  \def\polhk#1{\setbox0=\hbox{#1}{\ooalign{\hidewidth
  \lower1.5ex\hbox{`}\hidewidth\crcr\unhbox0}}} \def\cprime{$'$}
  \def\cprime{$'$} \def\cprime{$'$} \def\cprime{$'$}
  \def\soft#1{\leavevmode\setbox0=\hbox{h}\dimen7=\ht0\advance \dimen7
  by-1ex\relax\if t#1\relax\rlap{\raise.6\dimen7
  \hbox{\kern.3ex\char'47}}#1\relax\else\if T#1\relax
  \rlap{\raise.5\dimen7\hbox{\kern1.3ex\char'47}}#1\relax \else\if
  d#1\relax\rlap{\raise.5\dimen7\hbox{\kern.9ex \char'47}}#1\relax\else\if
  D#1\relax\rlap{\raise.5\dimen7 \hbox{\kern1.4ex\char'47}}#1\relax\else\if
  l#1\relax \rlap{\raise.5\dimen7\hbox{\kern.4ex\char'47}}#1\relax \else\if
  L#1\relax\rlap{\raise.5\dimen7\hbox{\kern.7ex
  \char'47}}#1\relax\else\message{accent \string\soft \space #1 not
  defined!}#1\relax\fi\fi\fi\fi\fi\fi}
\begin{thebibliography}{PRLS03}

\bibitem[BV96]{BalVol96a}
Zoltan Balogh and Alexander Volberg.
\newblock Geometric localization, uniformly {J}ohn property and separated
  semihyperbolic dynamics.
\newblock {\em Ark. Mat.}, 34(1):21--49, 1996.

\bibitem[CJY94]{CarJonYoc94}
Lennart Carleson, Peter~W. Jones, and Jean-Christophe Yoccoz.
\newblock Julia and {J}ohn.
\newblock {\em Bol. Soc. Brasil. Mat. (N.S.)}, 25(1):1--30, 1994.

\bibitem[FLM83]{FreLopMan83}
Alexandre Freire, Artur Lopes, and Ricardo Ma{\~n}{\'e}.
\newblock An invariant measure for rational maps.
\newblock {\em Bol. Soc. Brasil. Mat.}, 14(1):45--62, 1983.

\bibitem[GS98]{GraSmi98}
Jacek Graczyk and Stas Smirnov.
\newblock Collet, {E}ckmann and {H}\"older.
\newblock {\em Invent. Math.}, 133(1):69--96, 1998.

\bibitem[Hin93]{Hin93}
A.~Hinkkanen.
\newblock Julia sets of rational functions are uniformly perfect.
\newblock {\em Math. Proc. Cambridge Philos. Soc.}, 113(3):543--559, 1993.

\bibitem[HP06]{HaiPil0612}
Peter Ha{\"\i}ssinsky and Kevin~M. Pilgrim.
\newblock Coarse expanding conformal dynamics.
\newblock 2006.
\newblock arXiv:math/0612617v1, to appear in Asterisque.

\bibitem[KL98]{KimLan98}
Kiwon Kim and Navah Langmeyer.
\newblock Harmonic measure and hyperbolic distance in {J}ohn disks.
\newblock {\em Math. Scand.}, 83(2):283--299, 1998.

\bibitem[Lju83]{Lju83}
M.~Ju. Ljubich.
\newblock Entropy properties of rational endomorphisms of the {R}iemann sphere.
\newblock {\em Ergodic Theory Dynam. Systems}, 3(3):351--385, 1983.

\bibitem[LM97]{LyuMin97}
Mikhail Lyubich and Yair Minsky.
\newblock Laminations in holomorphic dynamics.
\newblock {\em J. Differential Geom.}, 47(1):17--94, 1997.

\bibitem[Ma{\~n}83]{Man83}
Ricardo Ma{\~n}{\'e}.
\newblock On the uniqueness of the maximizing measure for rational maps.
\newblock {\em Bol. Soc. Brasil. Mat.}, 14(1):27--43, 1983.

\bibitem[McM94]{McM94}
Curtis~T. McMullen.
\newblock {\em Complex dynamics and renormalization}, volume 135 of {\em Annals
  of Mathematics Studies}.
\newblock Princeton University Press, Princeton, NJ, 1994.

\bibitem[McM98]{McM98}
Curtis~T. McMullen.
\newblock Self-similarity of {S}iegel disks and {H}ausdorff dimension of
  {J}ulia sets.
\newblock {\em Acta Math.}, 180(2):247--292, 1998.

\bibitem[MdR92]{MandRo92}
R.~Ma{\~n}{\'e} and L.~F. da~Rocha.
\newblock Julia sets are uniformly perfect.
\newblock {\em Proc. Amer. Math. Soc.}, 116(1):251--257, 1992.

\bibitem[Mih08]{Mih0803}
Nicolae Mihalache.
\newblock Julia and {J}ohn revisited.
\newblock 2008.
\newblock arXiv:0803.3889v3.

\bibitem[Pom92]{Pom92}
Ch. Pommerenke.
\newblock {\em Boundary behaviour of conformal maps}, volume 299 of {\em
  Grundlehren der Mathematischen Wissenschaften [Fundamental Principles of
  Mathematical Sciences]}.
\newblock Springer-Verlag, Berlin, 1992.

\bibitem[PR98]{PrzRoh98}
Feliks Przytycki and Steffen Rohde.
\newblock Porosity of {C}ollet-{E}ckmann {J}ulia sets.
\newblock {\em Fund. Math.}, 155(2):189--199, 1998.

\bibitem[PRL07]{PrzRiv07}
Feliks Przytycki and Juan Rivera-Letelier.
\newblock Statistical properties of topological {C}ollet-{E}ckmann maps.
\newblock {\em Ann. Sci. \'Ecole Norm. Sup. (4)}, 40(1):135--178, 2007.

\bibitem[PRLS03]{PrzRivSmi03}
Feliks Przytycki, Juan Rivera-Letelier, and Stanislav Smirnov.
\newblock Equivalence and topological invariance of conditions for non-uniform
  hyperbolicity in the iteration of rational maps.
\newblock {\em Invent. Math.}, 151(1):29--63, 2003.

\bibitem[Prz00]{Prz00}
Feliks Przytycki.
\newblock H\"older implies {C}ollet-{E}ckmann.
\newblock {\em Ast\'erisque}, (261):xiv, 385--403, 2000.
\newblock G{\'e}om{\'e}trie complexe et syst{\`e}mes dynamiques (Orsay, 1995).

\bibitem[PU01]{PrzUrb01}
F.~Przytycki and M.~Urba{\'n}ski.
\newblock Porosity of {J}ulia sets of non-recurrent and parabolic
  {C}ollet-{E}ckmann rational functions.
\newblock {\em Ann. Acad. Sci. Fenn. Math.}, 26(1):125--154, 2001.

\bibitem[PUZ91]{PrzUrbZdu91}
Feliks Przytycki, Mariusz Urba{\'n}ski, and Anna Zdunik.
\newblock Harmonic, {G}ibbs and {H}ausdorff measures on repellers for
  holomorphic maps. {II}.
\newblock {\em Studia Math.}, 97(3):189--225, 1991.

\bibitem[Roe08]{Roe08}
P.~Roesch.
\newblock On local connectivity for the {J}ulia set of rational maps:
  {N}ewton's famous example.
\newblock {\em Ann. of Math. (2)}, 168(1):127--174, 2008.

\bibitem[Tan97]{Lei97}
Lei Tan.
\newblock Branched coverings and cubic {N}ewton maps.
\newblock {\em Fund. Math.}, 154(3):207--260, 1997.

\bibitem[Yin99]{Yin99}
Yong~Cheng Yin.
\newblock Julia sets of semi-hyperbolic rational maps.
\newblock {\em Chinese Ann. Math. Ser. A}, 20(5):559--566, 1999.

\bibitem[Yin00]{Yin00}
Yongcheng Yin.
\newblock Geometry and dimension of {J}ulia sets.
\newblock In {\em The {M}andelbrot set, theme and variations}, volume 274 of
  {\em London Math. Soc. Lecture Note Ser.}, pages 281--287. Cambridge Univ.
  Press, Cambridge, 2000.

\bibitem[YZ01]{YamZak01}
Michael Yampolsky and Saeed Zakeri.
\newblock Mating {S}iegel quadratic polynomials.
\newblock {\em J. Amer. Math. Soc.}, 14(1):25--78 (electronic), 2001.

\end{thebibliography}
\end{document}